\documentclass[11pt]{amsart}

\usepackage[a4paper,left=28mm,right=28mm,top=27mm,bottom=29mm]{geometry}
\usepackage[T1]{fontenc}
\usepackage{lmodern}
\usepackage{microtype}
\usepackage{mathtools,amssymb,amsthm}
\usepackage{xcolor}
\usepackage[colorlinks=true,linkcolor=blue!55!black,
            citecolor=blue!55!black,urlcolor=blue!65!black]{hyperref}
\hypersetup{
  pdftitle={Convergence and   Semisimplicity of Big Quantum Cohomology of Flag Varieties},
  pdfauthor={Changzheng Li, Zhaoyang Liu, Tony Yue YU, and Chi Zhang},
  pdfsubject={Quantum cohomology, analytic reconstruction, and flag varieties},
  pdfkeywords={big quantum cohomology, flag varieties, convergence, generic semisimplicity, quantum D-module}
}

\numberwithin{equation}{section}

\newtheorem{theorem}{Theorem}[section]
\newtheorem{proposition}[theorem]{Proposition}
\newtheorem{lemma}[theorem]{Lemma}
\newtheorem{corollary}[theorem]{Corollary}
\newtheorem{conjecture}[theorem]{Conjecture}
\theoremstyle{definition}
\newtheorem{definition}[theorem]{Definition}

\theoremstyle{remark}
\newtheorem{remark}[theorem]{Remark}

\DeclareMathOperator{\QH}{QH}
\DeclareMathOperator{\Jac}{Jac}
\DeclareMathOperator{\Crit}{Crit}
\DeclareMathOperator{\Frac}{Frac}
\DeclareMathOperator{\Spec}{Spec}
\DeclareMathOperator{\Spf}{Spf}
\DeclareMathOperator{\Tr}{Tr}
\DeclareMathOperator{\coker}{coker}
\DeclareMathOperator{\Hess}{Hess}

\DeclareMathOperator{\im}{im}

\newcommand{\etale}{\'{e}tale}

\newcommand{\C}{\mathbb C}
\newcommand{\Z}{\mathbb Z}

\newcommand{\dbb}[1]{[\![#1]\!]}

\title[Convergence and semisimplicity for big quantum cohomology of G/P]
{Convergence and   Semisimplicity of Big Quantum Cohomology of Flag Varieties} 

\author{Changzheng Li}
\address{Changzheng Li, School of Mathematics, Sun Yat-sen University, Xingangxi Road 135, Haizhu, Guangzhou, 510275, China, Room 815}
\email{lichangzh@mail.sysu.edu.cn}
\author{Zhaoyang Liu}
\address{Zhaoyang Liu, Department of Mathematics, King’s College London, London WC2R 2LS, United Kingdom}
\email{zhaoyang.1.liu@kcl.ac.uk}

\author{Tony Yue YU}
\address{Tony Yue YU, Department of Mathematics, M/C 253-37, Caltech, 1200 E.\ California Blvd., Pasadena, CA 91125, USA}
\email{yuyuetony@gmail.com}

\author{Chi Zhang}
\address{Chi Zhang, Department of Mathematics, KIAS (Korea Institute for Advanced Study) · 85 Hoegiro, Dongdaemun-gu, Seoul 02455, Republic of Korea  }
\email{czhang@kias.re.kr}

\subjclass[2020]{Primary 14N35, Secondary 14M15, 32S30, 53D45}
\keywords{Quantum cohomology,  flag varieties,
convergence,  semisimplicity,  
mirror symmetry}

\begin{document}

\begin{abstract}
We prove the folklore conjecture that the big quantum cohomology of a  
flag variety $G/P$ of general Lie type is convergent near the large-radius limit and is generically semisimple.
The convergence relies on a slightly modified version of Iritani's reconstruction
theorem, together with Cauchy's formula that avoids directly taking the non-equivariant limit at the origin. Generic semisimplicity
follows from a surprising application of the big quantum $D$-module mirror symmetry for
$G/P$ established by the first, third and fourth authors jointly with Hinault and Zhang.
Moreover, we prove the   regularity conjecture on the big quantum cohomology of
$G/P$ formulated by Perrin and Smirnov, which implies generic semisimplicity.
\end{abstract}

\maketitle


\section{Introduction}

The big quantum cohomology $QH^*(X)$ of  a smooth complex projective variety $X$ is a deformation of the classical cohomology ring $H^*(X)$. The structure 
   constants of the quantum product $\alpha\star\beta$ for $\alpha, \beta\in H^*(X)$    are formal power series in $\mathbb{C}\dbb{\mathbf{q}, \tau}$, defined by incorporating  genus-zero Gromov-Witten invariants of $X$.
   Convergence near the large-radius limit means that these series define
holomorphic functions on a sufficiently small polydisc centered at
$(\mathbf{q},\tau)=(\mathbf 0, \mathbf 0)$. This fundamental analytic property is
conjectured to hold in general, as already anticipated by Kontsevich
and Manin~\cite[Section~4.3]{KoMa94}.
Convergence has been shown for many examples, including 
smooth odd-dimensional complete intersections in projective spaces
by Zinger \cite{Zing} and 
smooth projective toric varieties as well as Fano manifolds with divisor-generated classical cohomology by Iritani \cite{Iri07}.

Alongside convergence, an important  question is whether $QH^*(X)$
is generically semisimple, i.e. whether its generic fiber is
a product of fields.
   This property enables the reconstruction of higher-genus
Gromov--Witten theory of $X$ from its genus-zero data, as conjectured by
Givental~\cite{Giv} and proved by Teleman~\cite{Tel}. 
Generic semisimplicity is also central to the remarkable conjecture of Dubrovin \cite{Dubrovin} for Fano manifolds $X$ and its refinements \cite{GGI, CDG}.
As clarified in~\cite{GMS}, its first part predicts that $QH^*(X)$
is generically semisimple if and only if the bounded derived category $D^b_{\mathrm{coh}}(X)$ of coherent sheaves on $X$ admits a full exceptional collection. 
Moreover, semisimplicity can also provide a criterion for convergence.
As shown by Cotti \cite{Cotti}, 
for Fano manifolds, generic semisimplicity of the small quantum cohomology  $QH^*_{\rm small}(X)=QH^*(X)|_{\tau=\mathbf 0}$ implies the convergence of  
$QH^*(X)$.
As shown by Coates and Iritani \cite{CoIr},  
the combination of genus-zero convergence and generic semisimplicity of $QH^*(X)$ implies the so-called 
NF-convergence of the descendant potentials in every genus.

In this paper we focus on flag varieties $X=G/P$, which play an important role in algebraic geometry and representation theory. Here $G$ is a simply connected complex simple Lie group and $P\subset G$ is a parabolic subgroup. These Fano manifolds need not have divisor-generated cohomology  
or  semisimple small quantum cohomology, for example, the symplectic Grassmannian $SG(2, 6)$ \cite{CMP, ChPe, GMS}.
Nevertheless, The
following is a folklore conjecture for big quantum cohomology.

\begin{conjecture}
\label{mainconj}
The big quantum cohomology of  $G/P$ is convergent near the large-radius limit and is generically  semisimple.
\end{conjecture}

\noindent Due to the aforementioned result \cite{Cotti}, the above conjecture holds whenever the small quantum cohomology $QH^*_{\rm small}(G/P)$ is generically semisimple. Hence, it  holds for complex Grassmannians \cite{SiTi, Abrams}, (co)minuscule Grassmannians \cite{CMP},  adjoint non-coadjoint Grassmannians \cite{ChPe}, a part of Grassmannians of exceptional Lie types \cite{Bel}, 
complete flag varieties $G/B$ of general Lie type  \cite
{LaRi} and partial flag varieties $SL(n, \mathbb{C})/P$ \cite{OSX}.
Generic semisimplicity of $QH^*(G/P)$ on the formal level (without assuming the convergence) has also been proved for (co)adjoint Grassmannians   ~\cite{CMPS, PeSm} and for $G/P$ with $G$  of Lie type $A$ or $C$ \cite{IrKo}.

 Combining \textbf{Theorem \ref{thm:convergence}} and \textbf{Theorem \ref{thm:analytic-semisimplicity}}, we establish the following main result.

\begin{theorem}\label{thm:main}
Conjecture \ref{mainconj} holds for any flag variety $G/P$ of general Lie type.
\end{theorem}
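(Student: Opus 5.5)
The plan has two independent halves, matching the two theorems the introduction announces: convergence of $\QH^*(G/P)$ near $(\mathbf q,\tau)=(\mathbf 0,\mathbf 0)$, and generic semisimplicity of the resulting analytic Frobenius manifold. Theorem~\ref{thm:main} is then their conjunction.

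\textbf{Convergence.} I would use Iritani's reconstruction strategy. The $T$-equivariant small $J$-function of $G/P$ is known to converge. For instance, one can take the Peterson/Lam--Shimozono description of the small quantum ring via the quantum Toda lattice, or the Rietsch mirror oscillatory integrals, which are manifestly entire in $\mathbf q$. From this I would reconstruct the big $J$-function along all directions of $H^*(G/P)$ by a Birkhoff factorization / Dubrovin-connection argument. Since the cohomology is not divisor-generated, the reconstruction has to be done equivariantly: the $T$-equivariant quantum $D$-module is generated by divisors together with equivariant parameters (Schubert classes are polynomials in divisors and $\lambda$'s after localization). Iritani's theorem then shows that the equivariant big quantum product converges on a polydisc whose radius could a priori shrink as the equivariant parameters tend to $0$. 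To avoid taking the non-equivariant limit at the origin $\lambda=0$ directly, I would establish uniform convergence for $\lambda$ on a small circle $|\lambda|=r$. I would then apply Cauchy's integral formula in $\lambda$: the structure constants are polynomial in $\lambda$ at each formal order, so their values at $\lambda=0$ are averages over the circle. The resulting bounds give convergence of the non-equivariant series on a fixed polydisc. The main technical point is arranging the ``slightly modified'' reconstruction so that it holds with $\lambda$ varying in a compact set away from the bad locus.

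\textbf{Semisimplicity.} With convergence in hand, I would invoke the big quantum $D$-module mirror theorem for $G/P$ (Hinault--Li--Yu--Zhang--Zhang). It identifies the analytic big quantum $D$-module with the Gauss--Manin system of a universal unfolding of the Rietsch Landau--Ginzburg model $(\check X, W)$. Under this identification, the quantum multiplication by the Euler field at a point corresponds to multiplication by $W_t$ on the Jacobian ring $\Jac(W_t)$ of the deformed potential. Because the unfolding is universal (miniversal), the deformations $W_t$ include generic perturbations that are Morse with distinct critical values. Hence $\Jac(W_t)$ is reduced, i.e.\ semisimple, for generic $t$. One still has to check that the count of critical points matches $\dim H^*(G/P)$; this follows from the isomorphism $\Jac\cong \QH$ of ranks together with properness/tameness of $W$. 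Semisimplicity is an open condition, so it then holds on a dense open subset of the convergence domain.

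\textbf{Main obstacle.} I expect the main obstacle to be ensuring that the mirror isomorphism covers an open set of big parameters lying inside the convergence polydisc, so that the generic Morse deformation is actually realized. If this is problematic, I would fall back on the Perrin--Smirnov regularity statement. It says that Euler multiplication at a suitable point has a cyclic vector, i.e.\ is regular. I would prove this from mirror symmetry by exhibiting the class $1$ as cyclic for $W_t$ on $\Jac(W_t)$. Regularity in turn implies generic semisimplicity via the standard Hertling--Manin argument for $F$-manifolds with Euler field.
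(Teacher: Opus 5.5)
Your convergence half follows the paper's argument. It applies Iritani's reconstruction to the $T$-equivariant small quantum connection, makes it uniform for $\lambda=\zeta\xi_0$ with $|\zeta|=\delta$, and then uses Cauchy's formula at $\zeta=0$. No $J$-function input is needed, since that connection is polynomial in $\mathbf q$ because $X$ is Fano. The circle must lie on a complex line through the origin, with $\xi_0$ off the zero locus of the homogeneous polynomial $\det\mathcal V^T(\lambda)$, so that the whole circle avoids the bad locus.

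The gap is in the semisimplicity half. The big mirror theorem is \emph{formal}: it is an isomorphism over $S=\C(\mathbf q)\dbb{\mathbf y}$ with a formal mirror map $\mu$, not an identification of analytic $D$-modules. So a statement such as ``$\Jac(W_t)$ is reduced for generic $t$'' cannot be transported to points of the A-side convergence polydisc. You flag this yourself but do not resolve it. Your appeal to tameness of $W$ is also unproven, and it is not needed.

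The fallback does not work either. The Perrin--Smirnov conjecture concerns regularity of the \emph{ring} $\QH(X)$ (smoothness of the quantum spectral cover), not cyclicity of $E\star$. Moreover, cyclicity of $E\star$ does not imply generic semisimplicity. Take the constant Frobenius manifold on $\C^2$ with potential $\tfrac12t_1^2t_2$ (fibre algebra $\C[x]/(x^2)$) and Euler field $E=t_1\partial_{t_1}+c\,\partial_{t_2}$ with $c\neq0$. Then $E\circ$ is a single Jordan block at every point, yet the manifold is nowhere semisimple.

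The missing step is to stay formal and only then use the trace discriminant. Your versality heuristic has a precise algebraic form. The classes $[\partial_{y_i}F|_{\mathbf y=0}]$ form a basis of $\Jac(W)$, so their differentials at any critical point $p$ span $\coker\Hess_p(W)$. Hence $\partial_{z_1}F,\dots,\partial_{z_M}F$ are part of a regular system of parameters of $\widehat{\mathcal O}_{Y,p}\dbb{\mathbf y}$, and $\mathcal A=\Jac(F)$ is a regular ring (Proposition~\ref{prop:transversality}). Finiteness of $\mathcal A$ over $S$ comes from the freeness of $\mathcal H_B$, not from tameness, and it guarantees that every maximal ideal of $\mathcal A$ lies over a critical point of $W$.

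It follows that $\mathcal A\otimes_S\Frac(S)$ is finite \etale. The $S$-algebra isomorphism $\mathcal A\cong\mu^*\QH(X)$ obtained from \eqref{eqn:compt} then shows that $\mathcal D=\det\bigl(\Tr(\mathcal C_{\sigma_{v_i}}\mathcal C_{\sigma_{v_j}})\bigr)$ is a nonzero element of $\C[\mathbf q]\dbb{\tau}$. Convergence enters only at this point, and convergence of $\mu$ is never needed. In the coordinates $x$, $\mathcal D$ becomes a holomorphic function on the polydisc with nonzero Taylor series. Its nonvanishing locus, which is exactly the semisimple locus, is therefore open and dense. Openness of semisimplicity alone, as in your final sentence, would not give density without such a nonvanishing statement.
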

Combined with the existence of full exceptional collections
on $G/P$ established by Samokhin and van der Kallen
\cite[Theorem~14.5]{SaKa},   the following holds immediately. 
\begin{corollary}
    The first part of Dubrovin's conjecture holds for $G/P$.    
\end{corollary}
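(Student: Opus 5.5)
The statement is the corollary: the first part of Dubrovin's conjecture holds for $G/P$. As clarified in \cite{GMS}, this first part asserts an equivalence. The big quantum cohomology $QH^*(X)$ of a Fano manifold $X$ should be generically semisimple if and only if $D^b_{\mathrm{coh}}(X)$ admits a full exceptional collection. So the plan is to verify both sides of this equivalence independently for $X=G/P$. Then the ``if and only if'' holds trivially, since both conditions are true.

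First, I would record that $G/P$ is a Fano manifold, so it falls within the scope of Dubrovin's conjecture. This holds because the anticanonical class of $G/P$ is $\sum_{\alpha}\langle\cdot\rangle$, a positive combination of the fundamental weights attached to the simple roots not in $P$. That combination is ample.

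Second, for the quantum side, I would invoke Theorem~\ref{thm:main}. Via Theorem~\ref{thm:convergence}, it says $QH^*(G/P)$ converges on a small polydisc around $(\mathbf q,\tau)=(\mathbf 0,\mathbf 0)$. Via Theorem~\ref{thm:analytic-semisimplicity}, it says the resulting analytic Frobenius manifold is generically semisimple. Convergence matters here because Dubrovin's conjecture is formulated for the analytic (not merely formal) big quantum cohomology. So the convergence statement is exactly what makes ``generic semisimplicity'' meaningful in the intended sense.

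Third, for the categorical side, I would cite \cite[Theorem~14.5]{SaKa}. There Samokhin and van der Kallen construct a full exceptional collection on $D^b_{\mathrm{coh}}(G/P)$ for every simply connected simple $G$ and every parabolic $P$.

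Combining the second and third steps, both sides of the equivalence hold, so the equivalence holds. No step is a genuine obstacle once Theorem~\ref{thm:main} is granted. The only point requiring care is to check that the notion of semisimplicity proved in Theorem~\ref{thm:analytic-semisimplicity} matches the one in \cite{GMS}. Namely, it should be semisimplicity of the analytic quantum product at a generic point of the convergence domain, rather than of the formal ring over $\mathbb C\dbb{\mathbf q,\tau}$. I would confirm this from the statement of Theorem~\ref{thm:analytic-semisimplicity}.
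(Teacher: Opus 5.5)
Your proposal is correct and follows the paper's own argument. Both sides of the equivalence are checked separately: the quantum side comes from Theorem~\ref{thm:main} (convergence together with analytic, and also formal, generic semisimplicity), and the categorical side comes from \cite[Theorem~14.5]{SaKa}. The only blemish is the garbled formula in your Fano step; the claim that the anticanonical class is a positive combination of the fundamental weights indexed by $I^P$ is standard, and the paper simply takes it as known.
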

The proof of  the convergence part consists of   two key steps: (1) we strengthen Iritani's reconstruction theorem \cite[Proposition 5.8]{Iri07} and consider equivariant big quantum cohomology, which is divisor-generated after localizing the equivariant parameters. As a consequence, we establish convergence
of the equivariant quantum product near the large-radius limit,
uniformly along a suitable circle of specializations. (2) 
Due to the localization, a
direct passage to the non-equivariant limit at the origin is unjustified. We overcome this 
issue by applying
Cauchy's formula to the aforementioned circle of specialization, which results in  the non-equivariant convergence.

The proof of generic semisimplicity uses the big quantum $D$-module mirror theorem
established by the first, third and fourth authors jointly with Thorgal Hinault and Shaowu
Zhang~\cite[Theorem 4.38]{HLYZZ}, building on Chow's small quantum $D$-module mirror theorem~\cite{Chow}. The big mirror theorem identifies the  A-side quantum D-module with the B-side Gauss-Manin system of an unfolding  of Rietsch's mirror superpotential \cite{Rie}. On the B-side, we prove that the relative critical locus of the unfolding   is regular. Hence, we obtain the following in \textbf{Theorem \ref{thm:regularity}} by mirror symmetry, which proves  \cite[Conjecture~1.7]{PeSm}   of  Perrin and Smirnov~ on the regularity of $QH^*(G/P)$ (see Remark \ref{rmk:regular} for detailed explanations). Then we obtain the formal generic semisimplicity as a consequence. Combining it with convergence, we establish the semisimplicity part of Theorem \ref{thm:main}. 

\begin{theorem}\label{thm:formal-main}
After extending coefficients to $\C(\mathbf q)\dbb{\tau}$, the big quantum cohomology ring $\QH^*(G/P)$ is a regular ring, and its generic fiber is finite \'etale over $\operatorname{Frac}\big(\mathbb{C}(\mathbf q)\dbb{\tau}\big)$. In particular, $QH^*(G/P)$ is formally generically semisimple.
\end{theorem}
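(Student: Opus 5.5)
The plan is to transport the question to the B-side through the big quantum $D$-module mirror theorem \cite[Theorem 4.38]{HLYZZ} and prove regularity there.

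\textbf{Step 1: identify $\QH^*(G/P)$ with a relative Jacobian ring.} Let $W_\tau$ be the universal unfolding of Rietsch's superpotential $W$ on the mirror family $Z\to \Spec\C[\mathbf q^{\pm1}]$, with unfolding parameters $\tau$. The mirror theorem identifies the quantum $D$-module with the Gauss--Manin system of $W_\tau$. Taking the $z\to0$ (semiclassical) limit of this isomorphism, I would extract a ring isomorphism
\[
\QH^*(G/P)\otimes\C(\mathbf q)\dbb{\tau}\;\cong\;\mathcal O(\Crit_{\mathrm{rel}}(W_\tau))\otimes\C(\mathbf q)\dbb{\tau},
\]
where $\Crit_{\mathrm{rel}}$ is the relative critical scheme over the base. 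The $\tau$-direction comes from the unfolding, and at $\tau=0$ the isomorphism specializes to Chow's small isomorphism \cite{Chow}. The main ingredients are that the Gauss--Manin system is free of the right rank and that the associated graded with respect to $z$ is the Jacobian ring. Finiteness of the relative critical locus over the base (after localizing in $\mathbf q$) should follow from the properness and tameness statements used in \cite{HLYZZ}.

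\textbf{Step 2: regularity of the relative critical locus.} This is the main obstacle. The relative critical scheme is cut out by the fiberwise partial derivatives $\partial W_\tau/\partial x_i$, $i=1,\dots,N$, inside the total space $Z\times\Spf\C\dbb{\tau}$, whose dimension is $N+\dim(\text{base})$. I would show that this scheme is smooth over $\C$, and hence regular, by a Jacobian criterion that uses the unfolding directions. Since the unfolding is universal, i.e.\ versal, the functions $\partial_\tau W_\tau$ span the Jacobian ring at each critical point. Consequently the differential of the map $(x,\tau)\mapsto (\partial_{x}W_\tau)$ with respect to $(x,\tau)$ is surjective. That is exactly the condition that the critical scheme be a smooth complete intersection of codimension $N$. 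I would first check this versality-implies-smoothness argument formally near each closed point of $\Crit(W_{\mathbf q})$ at $\tau=0$. The delicate point is that we work over $\C(\mathbf q)\dbb{\tau}$ rather than over a field, so one must pass to completions and verify that regularity is preserved; this uses excellence of the rings involved. Transported through Step 1, this gives regularity of $\QH^*(G/P)\otimes\C(\mathbf q)\dbb{\tau}$.

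\textbf{Step 3: consequences.} The ring $A=\QH^*\otimes\C(\mathbf q)\dbb{\tau}$ is finite and free over the regular local ring $R=\C(\mathbf q)\dbb{\tau}$, and $A$ is itself regular. Its generic fiber $A\otimes_R\Frac R$ is then a finite algebra over a field which is a localization of a regular ring. It is therefore regular of dimension $0$, i.e.\ a product of fields. Each factor is separable because we are in characteristic $0$, so the generic fiber is finite étale over $\Frac R$. This is precisely formal generic semisimplicity.
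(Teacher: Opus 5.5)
Your proposal is correct and follows essentially the same route as the paper: the big mirror theorem makes $\mathcal A=\Jac(F)$ finite free of rank $N$ over $S$ and isomorphic to $\mu^*\QH(X)$ as an $S$-algebra. Regularity at each closed point $p\in\Crit(W)$ then holds because the classes $[\partial_{y_i}F|_{\mathbf y=0}]$ form a basis of $\Jac(W)$, which surjects onto $\coker\Hess_p(W)$, so $\partial_{z_1}F,\dots,\partial_{z_M}F$ are part of a regular system of parameters of $\kappa(p)\dbb{u_1,\dots,u_M,\mathbf y}$. Two minor points: excellence is not needed, since a Noetherian local ring is regular iff its completion is and every maximal ideal of $\mathcal A$ lies over $\mathbf y=0$; and passing from the generic fiber over $\Frac(\C(\mathbf q)\dbb{\tau})$ to the paper's definition over $\Frac(\C[\mathbf q]\dbb{\tau})$ is a harmless descent of semisimplicity along a field extension, which the paper phrases via the trace discriminant.
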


The paper is organized as follows. In Section~\ref{sec:setup} we review the background.   In Section 3, we prove the convergence by equivariant reconstruction. In Section 4, we prove the generic  semisimplicity using big quantum $D$-module mirror symmetry for flag varieties.

\subsection*{Acknowledgements}
The authors would like to thank Xiaowen Hu, Hua-Zhong Ke and Young-Hoon Kiem for helpful discussions.
C. Li is supported in part by the National Key R \&  D Program of China No.~2023YFA1009801 and Guangdong Key Laboratory of Applied Mathematics and Artificial Intelligence.
Z. Liu acknowledges support from the London School of
Geometry and Number Theory Centre for Doctoral Training (CDT), a joint venture between
University College London, Imperial College London and King's College London. T. Yu was partially
supported by NSF grants DMS-2302095 and DMS-2245099. C. Zhang is supported by a KIAS Individual Grant
(No.~MG110901) at Korea Institute for Advanced Study.

\subsection*{AI Disclosure}
ChatGPT  assisted with the proofs of the main
results at an early stage, particularly in establishing formal semisimplicity, and  assisted with the final proofreading of the manuscript.

\section{Preliminaries}\label{sec:setup} 
In this section, we review the necessary background of flag varieties. 
\subsection{Equivariant quantum cohomology of flag varieties} We refer to \cite{Hump, Brion, FuPa, Giv96} for more details of this subsection.
Let $G$ be a simply-connected complex simple Lie group of rank $n$, $B$ be a Borel subgroup of $G$, and $P$ be a  proper parabolic subgroup with $B\subset P\subset G$. 
Let $B_-$ denote the opposite Borel subgroup, so that $T\coloneqq B\cap B_-$ is a maximal torus of $G$.  
Let $\Delta=\{\alpha_1, \dots, \alpha_n\}$ be a basis of simple roots. Then $P$ corresponds to a unique subset $\Delta_P\subset \Delta$. Denote $I:=\{1, 2, \cdots, n\}$, $I_P:=\{i\in I\mid \alpha_i\in \Delta_P\}$ and $I^P:=I\setminus I_P$. 
The Weyl group $\mathsf W=\mathsf W_G$ (resp. $\mathsf W_P$)  is generated by simple reflections $s_i$, $i\in I$ (resp. $i\in I_P$).
 Let $\ell: \mathsf W\to \mathbb{Z}_{\geq 0}$ denote the standard length function and $w_0$ (resp.\ $w_P$) denote the longest element in $\mathsf W$ (resp.\ $\mathsf W_P$). 
Denote by $\mathsf W^P\subset \mathsf W$ the subset of minimal length representatives of the cosets $\mathsf W/\mathsf W_P$. 

The quotient $X=G/P$ is a Fano manifold, called a flag variety. There are Schubert varieties $X^w\coloneqq\overline{BwP/P}$ (resp.\ $X_w\coloneqq\overline{B_-wP/P}$) of (co)dimension $\ell(w)$ inside $X$, $w\in W^P$, which are all $T$-equivariant. Each $X_w$ defines a Schubert class $\sigma_w\in H^{2\ell(w)}(X, \mathbb{Z})$ by taking the Poincar\'e dual of its homology class, and also determines a $T$-equivariant Schubert class in 
$H^{2\ell(w)}_T(X)$ that is denoted as $\sigma_w$ by abuse of notation. Here and throughout, cohomology is taken with coefficients in $\mathbb{C}$ unless otherwise stated.
The ($T$-equivariant) Schubert classes form a basis of the ($T$-equivariant) cohomology:  
\[H^{\ast} (X, \mathbb{Z})=\bigoplus_{w\in \mathsf W^P} \mathbb{Z}\sigma_w,\qquad H^*_T(X)= \bigoplus_{w\in \mathsf W^P} R\sigma_w,\mbox{ where } R:=\mathbb{C}[\lambda_1, \cdots, \lambda_n]=H_T^*({\rm pt}).\]
\noindent Here we have denoted by 
 $\{-\lambda_1, \cdots, -\lambda_n\}$   the fundamental weights, to be consistent with the notation convention  in \cite{HLYZZ}.
More precisely, we view $\lambda_i$ as a character in $\mbox{Hom}(T, \mathbb{C}^{\ast})$, which induces  the one-dimensional representation $\mathbb{C}_{\lambda_i}$  of $T$.
Consider the  line bundles $\mathbb{C}_{\lambda_i}$ over a point, $i\in I$ (resp. $L_{\lambda_j}:=G\times_P\mathbb{C}_{\lambda_j}$ over $X$, $j\in I^P$). 
Then we have the equivariant first Chern classes  $\lambda_i= c_1^T(\mathbb{C}_{\lambda_i})$ and $c_1^T(L_{\lambda_j})=\sigma_{s_j}+\lambda_j$.

Let $\overline{\mathcal{M}}_{0, m}(X, d)$ denote 
the moduli space of genus-zero, $m$-pointed stable maps to $X$  of   degree $d\in H_2(X, \mathbb{Z})$
and   ${\rm ev}_i\colon \overline{\mathcal{M}}_{0, m}(X, d)\to X$ denote the $i$-th  $T$-equivariant  evaluation map. 
The natural action of $T$ on the convex space $X$ induces an action on  $\overline{\mathcal{M}}_{0,m} (X,d)$, resulting in   a $T$-equivariant  fundamental class $[\overline{\mathcal{M}}_{0,m} (X,d)]_T$.
For $\gamma_1, \dots, \gamma_m\in H^{\ast}_T(X,\mathbb{C})$, we have the genus-zero, 
$m$-point equivariant Gromov-Witten
invariant
$$  \langle \gamma_1, \dots, \gamma_m\rangle_{d}^T\coloneqq\int_{[\overline{\mathcal{M}}_{0,m}(X, d)]_T} {\rm ev}_1^*(\gamma_1)\cdot\dots \cdot {\rm ev}_m^*(\gamma_m) \in R.
$$
Taking the non-equivariant limit $\lambda=0$ gives  Gromov-Witten invariants   $\langle \gamma_1, \dots, \gamma_m\rangle_{d}\in \mathbb{C}$. 

Write $I^P=\{{i_1}, \dots, {i_r}\}$ and 
$W^P=\{v_1, \cdots, v_N\}$ with $v_j=s_{{i_j}}$ for $1\leq j\leq r$. 
For  $d\in H_2(X, \mathbb{Z})$, we have $d=\sum_j d_j[X^{s_{ {i_j}}}]$ and denote $\mathbf q^d:=\prod_{j=1}^rq_j^{d_j}$  with Novikov variables $q_j$. 
Elements of   $H^*_T(X)$ are of the form $\gamma=\sum_{i=1}^N\tau_i \sigma_{v_i}$, where  $\{\tau_i\}$ are the $R$-linear   coordinates of $H^*_T(X)$. The equivariant big quantum cohomology ring of $X$ is an $R[\mathbf q]\dbb{\tau}$-module, $\QH_T^{\ast}(X) \coloneqq (H_T^{\ast}(X)\otimes_{R}R[\mathbf q]\dbb{\tau}, \star_T) $, 
with the equivariant quantum product $\star_{T}$  defined by  
   \begin{align*}\label{eq:quantum_product}
\sigma_v \star_{T}\sigma_w &= \sum_{\eta\in W^P} \sum_{m\ge 0}\sum_{i_1,\dots,i_m} \sum_{d\in H_2(X, \mathbb{Z})}    \langle\sigma_v, \sigma_w, \sigma^\eta, \sigma_{v_{i_1}}, \cdots, \sigma_{v_{i_m}}\rangle^T_d \mathbf q^d \frac{\tau_{i_1}\cdots \tau_{i_m}}{m!}  \sigma_\eta.
\end{align*}
Here $\sigma^\eta$ denotes the equivariant Schubert class determined by $X^\eta$.
Since $X$ is Fano, the coefficient of $\tau_{i_1}\cdots\tau_{i_m}\sigma_\eta$ is a polynomial in $R[\mathbf q]$.

Taking the non-equivariant limit $\lambda=\mathbf 0$ of the equivariant quantum product gives the big quantum cohomology $QH^*(X)=(H^*(X)\otimes  \mathbb{C}[\mathbf q]\dbb{\tau}, \star)$.
By the divisor axiom of Gromov-Witten theory, for $1\leq j\leq r$, $\tau_j$ always appears in the form $q_je^{\tau_j}$ in the structure coefficients of the quantum product. We further assume $v_{r+1}={\rm id}$.
By abuse of notation, we also use $\tau$ for the $\mathbb C$-coordinates on $H^*(X)$.
Write $\tau_{\rm nd}=(\tau_{r+1},\ldots,\tau_N)$. Then the (non-equivariant) quantum product is equivalently given by 
$$\sigma_{v_i}\star \sigma_{v_j}=\sigma_{v_i}\cup \sigma_{v_j}+\sum_{l=1}^N\partial_{\tau_i}\partial_{\tau_j}\partial_{\tau_l}\Psi \sigma^{v_l},$$ where   $\Psi$ is  the (positive-degree, genus-zero) potential function   defined by    $$\Psi= \sum_{d \in H_2(X, \mathbb{Z})\setminus\{0\}}\mathbf q^d\sum_{m=0}^{\infty}\frac{1}{m!} \langle \tau,  \cdots,  \tau \rangle_{d}\in\mathbb{C}[q_1e^{\tau_1},\cdots, q_re^{\tau_r}]\dbb{\tau_{\rm nd}}.$$

\begin{definition}\label{def:convergence}
The big quantum cohomology $QH^*(X)$ is called \emph{convergent near the large-radius limit} if so are all the quantum products $\sigma_{v_i}\star\sigma_{v_j}$, or equivalently,  there exists $\epsilon>0$ such that  $\Psi$ absolutely converges as a series in $\mathbb{C}[q_1e^{\tau_1},\cdots, q_re^{\tau_r}]\dbb{\tau_{\rm nd}}$, whenever
\[
 |q_1e^{\tau_1}|<\epsilon, \cdots, |q_re^{\tau_r}|<\epsilon,\,\,\, |\tau_{r+1}|< \epsilon, \cdots, |\tau_N|<\epsilon.
 \]
 \end{definition}

\subsection{Semisimplicity}

\begin{definition}
Let $A$ be a finite-dimensional commutative unital algebra over a field $K$.
\begin{enumerate}
    \item $A$ is called  \emph{semisimple} if
$
 A\cong\prod_{\nu=1}^M K_\nu$, where   $K_\nu\supset K$ are fields with $[K_\nu:K]<\infty.$ 
 \item  $A$ is called \emph{\! finite \'{e}tale over $K$\!} if the structure morphism Spec$(A)$ $\rightarrow$ Spec$(K)$ is finite, flat and unramified.
 \item $A$ is called \emph{regular} if its localization at every prime ideal $\mathfrak{p}$ is a regular local ring $(A_{\mathfrak{p}}, \mathfrak{m}_{\mathfrak{p}})$, namely if  $\dim_{A_{\mathfrak{p}}/\mathfrak{m}_{\mathfrak{p}}}(\mathfrak{m}_{\mathfrak{p}}/\mathfrak{m}_{\mathfrak{p}}^2)$ equals the Krull dimension of $A_{\mathfrak{p}}$.
\end{enumerate}
\end{definition}

\begin{proposition}
\label{prop:finite-algebra-semisimplicity}
Let $A$ be a finite-dimensional commutative unital algebra  over a field $K$ of characteristic zero.    The following conditions are equivalent:
\begin{enumerate}
\item[\textup{(i)}] $A$ is semisimple.
\item[\textup{(ii)}] $A$ is reduced.
\item[\textup{(iii)}] $A$ is finite \'{e}tale over $K$.
\item[\textup{(iv)}] $A\otimes_K\overline K\cong\overline K^{\,N}$, where  $N=\dim_K A$ and $\overline K$ denotes  the algebraic closure of $K$.
\item[\textup{(v)}] the trace pairing
$(a,b)\mapsto\Tr_K(m_am_b)$ is nondegenerate, where $m_a$ denotes the linear operator on $A$ induced by the 
multiplication by $a$.
\item [\textup{(vi)}] $A$ is regular.
\end{enumerate}
\end{proposition}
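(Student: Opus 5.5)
The plan is to reduce everything to the structure theorem for Artinian rings and then cycle through the six conditions. Since $A$ is finite-dimensional over $K$, it is Artinian. Hence it decomposes as a finite product $A\cong\prod_{\nu=1}^M A_\nu$ of Artinian local $K$-algebras $(A_\nu,\mathfrak m_\nu)$ with residue fields $K_\nu=A_\nu/\mathfrak m_\nu$, finite over $K$. Its primes are exactly the maximal ideals, one for each factor, and $A_{\mathfrak p}\cong A_\nu$ has Krull dimension $0$.

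Each condition becomes a statement about the factors. Semisimplicity (i) means every $\mathfrak m_\nu=0$. Reducedness (ii) means every $A_\nu$ is reduced. Since $\mathfrak m_\nu$ is nilpotent, it coincides with the nilradical of $A_\nu$, so (i)$\Leftrightarrow$(ii). Regularity (vi) asks $\dim \mathfrak m_\nu/\mathfrak m_\nu^2=0$. By Nakayama and nilpotence of $\mathfrak m_\nu$, this forces $\mathfrak m_\nu=0$, so (vi)$\Leftrightarrow$(i).

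For (iii), finiteness and flatness over a field are automatic, so only unramifiedness matters. Over a field, unramified means that for each $\nu$ we have $\mathfrak m_\nu A_\nu=0$ and $K_\nu/K$ is separable. Separability is automatic in characteristic zero, so (iii)$\Leftrightarrow$(i). This is the one place the characteristic assumption is used.

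For (iv), first assume (i). Each $K_\nu\otimes_K\overline K\cong \overline K^{[K_\nu:K]}$, by the primitive element theorem and the fact that the minimal polynomial is separable. Summing over $\nu$ gives $\overline K^N$. Conversely, if $A\otimes_K\overline K\cong\overline K^N$, then it is reduced. Since $A$ embeds in $A\otimes_K\overline K$, $A$ is reduced, which is (ii).

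For (v), I would use the nilradical and base change.
\begin{itemize}
\item If $a$ is nilpotent, then $m_am_b$ is nilpotent, so $\Tr(m_am_b)=0$ for all $b$. Hence any nonzero nilpotent lies in the kernel of the pairing, and (v)$\Rightarrow$(ii).
\item Conversely, nondegeneracy is invariant under base change to $\overline K$, because the Gram matrix is unchanged. Under (iv), the pairing on $\overline K^N$ in the idempotent basis $e_i$ is $\Tr(m_{e_i}m_{e_j})=\delta_{ij}$, which is nondegenerate.
\end{itemize}

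The only mildly delicate point is the definition of unramified in (iii) and the correct use of separability. Everything else is routine, and I do not expect a genuine obstacle.
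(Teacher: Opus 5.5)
Your proposal is correct and takes the same route as the paper. The paper also reduces to the Artinian decomposition $A\cong\prod_\nu A_\nu$: it cites Atiyah--Macdonald for that decomposition and Milne for the unramified/\'etale and trace-form characterizations. It handles (vi) by noting that all local rings are zero-dimensional, so that regularity means each $A_{\mathfrak m}$ is a field. You simply write out the standard arguments the paper cites.

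One small correction to your commentary: separability, hence characteristic zero, is not used only in (iii). It is also essential in your step (i)$\Rightarrow$(iv), on which your derivation of (v) rests.
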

  \begin{proof}
      The equivalence of conditions~(i)--(v) above follows from
\cite[Theorem~8.7]{AtMa} and
\cite[Propositions~I.3.1 and~I.3.2]{Miln}, using the
separability of finite field extensions in characteristic zero.
For the equivalence with~(vi), since $A$ is finite-dimensional over $K$, all its local rings
are zero-dimensional. Hence $A$ is regular if and only if
each localization $A_{\mathfrak m}$ at a maximal ideal is
a field, or equivalently, if $A$ is a finite product of
finite field extensions of $K$.
  \end{proof}
\begin{definition}
  We call the big quantum cohomology $QH^*(X)$   \emph{formally generically semisimple} without assuming convergence, if 
  $QH^*(X)\otimes_{\mathbb{C}[\mathbf{q}]\dbb{\tau}}K$ is semisimple as a $K$-algebra, where $K$ is the fraction field of ${\mathbb{C}[\mathbf{q}]\dbb{\tau}}$.

  We call $QH^*(X)$    
  \emph{(analytically) generically semisimple}, if  $QH^*(X)$ is convergent near the large-radius limit and the fiber $(H^*(X), \star_{(\mathbf q, \tau)})$ is semisimple as a $\mathbb{C}$-algebra, for a generic point $(\mathbf q, \tau)$ in the domain of convergence.
\end{definition}
\begin{remark}
For $QH^*(X)$,  our Novikov coefficient convention  differs   from
that  of \cite[Section 2.1]{PeSm}, but our 
 formal  generic semisimplicity is equivalent to the generic semisimplicity in \cite[Definition 2.3]{PeSm}, since 
 semisimplicity in characteristic zero is preserved and reflected under  field extensions.
\end{remark}
 
\section{Convergence of $QH^*(G/P)$ near the large-radius limit}
In this section, we show that the big quantum cohomology of $X=G/P$ is convergent near the large-radius limit, by considering equivariant big quantum cohomology, using Iritani's reconstruction theorem and applying Cauchy's formula.
\subsection{Uniform analytic reconstruction}

Let $H$ be an $N$-dimensional complex vector space with a distinguished vector
$1\in H$.  Fix $r\in \mathbb{Z}_{>0}$, $\delta\in \mathbb{R}_{>0}$ and write
$\mathbf q=(q_1,\dots,q_r)$ and $\tau_{\rm nd}=(\tau_{r+1},\dots,\tau_N)$. Put
\[
 \mathcal A_\delta=\{\zeta\in\C:|\zeta|=\delta\},
 \qquad
 \Delta^r_\rho(\mathbf{q})=\{\mathbf q\in\C^r:|q_a|<\rho\}.
\]

Let $\Omega_a(\zeta,\mathbf q)$, $1\le a\le r$, be $\operatorname{End}(H)$-valued
functions holomorphic on a neighborhood of $\mathcal A_\delta\times\{\mathbf q=0\}$ and
independent of $\hbar$.  Assume that for every $\zeta\in\mathcal A_\delta$ the
logarithmic connection
\begin{equation}
 \nabla^\hbar=\hbar\,d_q+\sum_{a=1}^r\Omega_a(\zeta,\mathbf q)\frac{dq_a}{q_a}
 \label{eq:small-connection}
\end{equation}
on the trivial bundle with fibre $H$ is flat, the flatness being required as an
identity in $\hbar$.  Put
\[
 p_a(\zeta)=\Omega_a(\zeta,0),\qquad
 \mathbf p(\zeta)=(p_1(\zeta),\dots,p_r(\zeta))
\].

Fix vectors $\phi_{r+1},\dots,\phi_N\in H$.  We consider formal flat extensions
of \eqref{eq:small-connection} of the form
\begin{equation}
 \widetilde\nabla^\hbar
 =\hbar\,d_{\mathbf q,\tau_{\rm nd}}+\sum_{a=1}^r\Omega_a(\zeta,\mathbf q,\tau_{\rm nd})\frac{dq_a}{q_a}
   +\sum_{j=r+1}^N\Phi_j(\zeta,\mathbf q,\tau_{\rm nd})\,d\tau_j,
 \qquad \Phi_j1=\phi_j ,
 \label{eq:big-connection}
\end{equation}
with coefficients in $\mathcal O(U)\dbb{\mathbf q,\tau_{\rm nd}}$ for some neighborhood $U$ of
$\mathcal A_\delta$, meaning coefficientwise holomorphic formal series.  Here
the flatness is relative to $(\mathbf q,\tau_{\rm nd})$ and is again required as an identity in
$\hbar$.  We write
\[
 \boldsymbol\Omega(\zeta,\mathbf q,\tau_{\rm nd})
   =(\Omega_1(\zeta,\mathbf q,\tau_{\rm nd}),\dots,\Omega_r(\zeta,\mathbf q,\tau_{\rm nd})),
 \qquad
 \boldsymbol\Omega(\zeta,\mathbf q)=\boldsymbol\Omega(\zeta, \mathbf q,0).
\]

The proposition below is Iritani's analytic reconstruction theorem
\cite[Proposition~5.8]{Iri07} with the additional assertion that
the convergence radius is uniform on the circle $\mathcal A_\delta$.

\begin{proposition}
\label{prop:uniform-reconstruction}
Assume that there are fixed polynomials $\chi_1,\dots,\chi_N$  such that
\begin{equation}
 \chi_1(\mathbf p(\zeta))1,\dots,\chi_N(\mathbf p(\zeta))1
 \label{eq:cyclic-frame}
\end{equation}
is a basis of $H$ for every $\zeta\in\mathcal A_\delta$.  Then the formal flat
extension \eqref{eq:big-connection} is unique.  Moreover, there is a constant
$\epsilon>0$, depending only on $\delta$, $\Omega$,$\chi$ and $\phi$, such that all
its matrix entries converge normally on
\[
 |q_a|<\epsilon \quad (1\leq a\leq r),\qquad |\tau_j|<\epsilon\quad (r<j\le N),
\]
uniformly for $\zeta\in\mathcal A_\delta$.
\end{proposition}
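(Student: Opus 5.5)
We follow Iritani's proof of \cite[Proposition~5.8]{Iri07} and track the dependence on $\zeta$ at each step. We treat $\zeta$ as a holomorphic parameter throughout and use the compactness of $\mathcal A_\delta$ to make every constant uniform.

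\emph{Step 1 (recursion and uniqueness).} Flatness of \eqref{eq:big-connection}, as an identity in $\hbar$, splits into two pieces. The $\hbar^0$ part gives commutativity: $[\Omega_a,\Omega_b]=[\Omega_a,\Phi_j]=[\Phi_j,\Phi_k]=0$. The $\hbar^1$ part gives the integrability relations, such as $q_b\partial_{q_b}\Omega_a=q_a\partial_{q_a}\Omega_b$ and $\partial_{\tau_j}\Omega_a=q_a\partial_{q_a}\Phi_j$. Set $\Phi_a:=\Omega_a$ for $a\le r$ and write $D_a=q_a\partial_{q_a}$, $D_j=\partial_{\tau_j}$. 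The endomorphisms then satisfy a first-order system. We apply it to the cyclic frame $e_k:=\chi_k(\boldsymbol\Omega)1$. By the hypothesis \eqref{eq:cyclic-frame} and continuity in $(\zeta,\mathbf q,\tau_{\rm nd})$, this frame is a basis near $\mathcal A_\delta\times\{0\}$. Commutativity gives $\Phi_j e_k=\chi_k(\boldsymbol\Omega)\phi_j$ in the original coordinates, so each $\Phi_j$ is determined by $\boldsymbol\Omega$ and $\phi_j$. Expanding in $\tau_{\rm nd}$, the relation $\partial_{\tau_j}\Omega_a=D_a\Phi_j$ shows that the coefficient of $\tau^{m+1}$ in $\boldsymbol\Omega$ is determined by the coefficients of order $\le m$. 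Hence, by induction on the $\tau_{\rm nd}$-degree starting from $\boldsymbol\Omega(\zeta,\mathbf q,0)=\boldsymbol\Omega(\zeta,\mathbf q)$, the extension is unique. The recursion also shows it exists and that every coefficient is holomorphic in $\zeta$ on a common neighborhood $U$.

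\emph{Step 2 (recasting as a Cauchy--Kovalevskaya problem).} Following Iritani, we write the system in terms of the fundamental-solution-free quantities $\Phi_j$. This gives a nonlinear PDE system of the form
\[
\partial_{\tau_j}\boldsymbol\Omega = F_j\bigl(\zeta,\mathbf q,\boldsymbol\Omega,D\boldsymbol\Omega\bigr),
\]
where $F_j$ is built from $\chi_k$, the inverse matrix of the frame $(e_k)$, and the $\phi_j$. In particular $F_j$ is holomorphic and rational in the entries, with denominator $\det(e_1,\dots,e_N)$. The initial data $\boldsymbol\Omega(\zeta,\mathbf q)$ are holomorphic on a neighborhood of $\mathcal A_\delta\times\{0\}$. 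By compactness, they are holomorphic on $U'\times\Delta^r_{\rho}$ for a fixed $\rho>0$ and a compact neighborhood $U'\supset\mathcal A_\delta$. On this set $|\det(e_k)|\ge c>0$ after shrinking $\rho$, again by compactness and \eqref{eq:cyclic-frame}.

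\emph{Step 3 (majorants, the main obstacle).} The singular operator $D_a=q_a\partial_{q_a}$ breaks the standard Cauchy--Kovalevskaya setup, and this is where we expect the main difficulty. Iritani handles it with Cauchy majorant estimates adapted to logarithmic derivatives: $D_a$ preserves the ideal $(q_a)$ and acts diagonally on monomials. Concretely, we use nested polydiscs with radii $\rho(1-s)$ and the Nagumo-type estimate $\sup|D_a f|_{\rho(1-s')}\le C\sup|f|_{\rho(1-s)}/(s-s')$. This yields a bound $\|\text{coefficient of }\tau^{m}\|\le C M^{m}$ on the polydisc of radius $\rho/2$. In the resulting estimate, the constants $C,M$ depend only on $\rho$, the sup of $|\boldsymbol\Omega(\zeta,\mathbf q)|$, the lower bound $c$, the coefficients of $\chi$, and $|\phi_j|$. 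Since all of these are uniform over $\zeta\in U'$, we take $\epsilon=\min(\rho/2,1/(2M))$. The resulting bounds give normal convergence of all matrix entries on the stated polydisc, uniformly in $\zeta\in\mathcal A_\delta$. As a side result, the sums are holomorphic in $\zeta$ as uniform limits, which Cauchy's formula needs later.
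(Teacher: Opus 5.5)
Your proposal is correct and takes essentially the paper's route. Both rerun Iritani's recursion: commutativity forces $\Phi_j=\bigl(\chi_k(\boldsymbol\Omega)\phi_j\bigr)_k\,\mathcal C^{-1}$, and then $\partial_{\tau_j}\Omega_a=q_a\partial_{q_a}\Phi_j$. Both get a $\zeta$-independent radius from compactness of $\mathcal A_\delta$, which bounds $\det\mathcal C(\zeta,0)$ away from zero and makes $\boldsymbol\Omega$, $\mathcal C^{-1}$ and $\Phi_j(\zeta,\mathbf q,0)$ uniformly bounded on $\mathcal A_\delta\times\overline{\Delta^r_\rho(\mathbf q)}$. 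The only difference is that you spell out the majorant step as a Cauchy--Kovalevskaya/Nagumo estimate, where $q_a\partial_{q_a}$, being a holomorphic vector field, is in fact no real obstacle; the paper just notes that Iritani's estimate is algebraic in these uniformly bounded data.
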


\begin{proof}
The recurrence and the elimination needed are exactly those of
\cite[Proposition~5.8]{Iri07}. The only new point here is the uniformity in $\zeta$, which we now establish by exhibiting the
input of Iritani's estimate.

Let
$
 \mathcal C(\zeta,\mathbf q)=\Bigl(\chi_1(\boldsymbol\Omega(\zeta,\mathbf q))1\ \Bigl|\ \cdots\
 \Bigr|\ \chi_N(\boldsymbol\Omega(\zeta,\mathbf q))1\Bigr).
$ 
By the hypothesis, $\det\mathcal C(\zeta,0)$ is continuous and nowhere zero on the
compact set $\mathcal A_\delta$, so $|\det\mathcal C(\zeta,0)|\ge c>0$ there. 
After shrinking the $\mathbf q$-radius, $\mathcal C$ and $\mathcal C^{-1}$ are
holomorphic and uniformly bounded on
$\mathcal A_\delta\times\overline{\Delta^r_\rho(\mathbf q)}$ for some $\rho>0$.

At $\tau_{\rm nd}=0$, it follows from the flatness of the quantum  connection that
$[\Omega_a,\Phi_j]=0$. Since $\chi_i(\boldsymbol\Omega)$
is a polynomial in $\Omega_1,\dots,\Omega_r$, it commutes with
$\Phi_j(\zeta,\mathbf{q},0)$, so column by column
\[
 \Phi_j(\zeta, \mathbf q,0)\chi_i(\boldsymbol\Omega(\zeta,\mathbf q))1
 =\chi_i(\boldsymbol\Omega(\zeta,\mathbf q))\Phi_j(\zeta, \mathbf q,0)1
 =\chi_i(\boldsymbol\Omega(\zeta,\mathbf q))\phi_j .
\]
Hence
\begin{equation}
 \Phi_j(\zeta, \mathbf q,0)
 =\bigl(\chi_1(\boldsymbol\Omega(\zeta,\mathbf q))\phi_j,\dots,
         \chi_N(\boldsymbol\Omega(\zeta,\mathbf q))\phi_j\bigr)\mathcal C(\zeta,\mathbf q)^{-1},
 \label{eq:initial-phi}
\end{equation}
which is uniformly bounded on a smaller neighborhood.

Thus all input data for the estimate in \cite[Proposition~5.8]{Iri07} are holomorphic near $\mathcal A_\delta$ and uniformly bounded there.
These data consist of the matrices $\Omega_a(\zeta,\mathbf q)$ and $\Phi_j(\zeta,\mathbf q,0)$ in a small neighborhood, together with finitely many coefficients expressing
a fixed basis of $H$.\footnote{The coefficients are denoted as $A^{(i)}_{a_1,\dots,a_l}$ in
\cite{Iri07}, which are fixed linear combinations of entries of $\mathcal C(\zeta,0)^{-1}$ and hence are holomorphic in  $\zeta$.}
The estimate itself is a purely algebraic calculation
in these data, so it yields bounds and hence gives a uniform convergence radius.
\end{proof}

\subsection{Equivariant reconstruction and convergence}
\label{sec:eq-reconstruction}
We prove the convergence of $QH^*(X)$
in Theorem~\ref{thm:convergence} through two key steps: (1) establish convergence
of the equivariant quantum product near the large-radius limit,
uniformly along a suitable circle of specializations; (2) apply
Cauchy's formula to obtain non-equivariant convergence, avoiding
an otherwise unjustified  direct passage to the non-equivariant limit.

Throughout this subsection, multiplication matrices are taken 
in the ordered basis $(\sigma_{v_1}, \cdots, \sigma_{v_N})$. Recall that   $\sigma_v$ denotes both ordinary and equivariant Schubert classes, so that $\gamma\in H^*(X)$ can naturally be viewed in $H^*_T(X)$ and $QH^*_T(X)$. Let $\mathcal{C}_\gamma^T(\lambda)$,
  $\mathcal{C}_\gamma^T(\lambda, \mathbf q, \tau)$ and $\mathcal{C}_\gamma(\mathbf q, \tau)$ be the matrices of the linear operator induced by the equivariant, equivariant quantum and quantum multiplication by $\gamma$, respectively (with $\gamma$ viewed as its canonical equivariant lifting in the first two cases). Then we have $\mathcal{C}_\gamma^T(0, \mathbf q, \tau)=\mathcal{C}_\gamma(\mathbf q, \tau)$.
 
 For $\xi\in\operatorname{Lie}(T)$, let
$\cdot|_\xi$ be the product on $H^*(X)$ obtained by evaluating
the equivariant structure coefficients in
$R=\C[\lambda]=\operatorname{Sym}(\operatorname{Lie}(T)^*)$
at $\xi$. We denote by $\mathcal{C}^T_\gamma(\lambda)|_{\xi}$ the resulting matrix of the operator  on   $(H^*(X), \cdot|_{\xi})$ given by  
 $\sigma_v\mapsto (\gamma\cdot\sigma_v)|_{\xi}$.
Let $\mathbf e_{j}$ be the $j$-th standard coordinate vector. Note that $\mathbf{e}_{r+1}$ is the coordinate vector of $\sigma_{v_{r+1}}=\sigma_{\rm id}$. Define $$D:=\sum_{i\in I^P}\sigma_{s_i}\in H^*_T(X),\qquad \mathcal V^T(\lambda)
 :=\bigl(
 \mathbf e_{r+1},\,
 \mathcal C_D^T(\lambda)\mathbf e_{r+1},\ldots,\,
 (\mathcal C_D^T(\lambda))^{N-1}\mathbf e_{r+1}
 \bigr). $$
\begin{proposition}\label{prop:cyclicity}
 There exists    $\xi_0\in\operatorname{Lie}(T)$
such that $\{D^{k}|_{\zeta\xi_0}\}_{k=0}^{N-1}$ form a $\mathbb{C}$-basis of   $(H^*(X), \cdot|_{\zeta \xi_0})$  for any $\zeta\in\C^\times$.
Moreover, we have 
\[
 \left.\mathcal V^T(\lambda)\right|_{\zeta\xi_0}
 \in
 \operatorname{Mat}_N(\C[\zeta])
 \cap
 \mathrm{GL}_N\bigl(\mathcal O(\C^\times)\bigr).
\]
\end{proposition}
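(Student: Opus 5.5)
We will use equivariant localization. The specialized algebra $(H^*(X),\cdot|_\xi)$ is the fiber of $H^*_T(X)$ at $\xi\in\operatorname{Lie}(T)$. For $\xi$ regular, i.e.\ not on any root hyperplane, the restriction map to the $T$-fixed points $\{wP\}_{w\in \mathsf W^P}$ gives an isomorphism
\[
(H^*(X),\cdot|_\xi)\cong \C^{N}.
\]
This follows from the Atiyah--Bott/GKM description: $H^*_T(X)\otimes_R \operatorname{Frac}(R)$ is diagonalized by the fixed-point classes, and the localization is integral away from the zero loci of the tangent weights. Under this isomorphism, $D$ corresponds to the vector of fixed-point restrictions $\bigl(D|_{w}(\xi)\bigr)_{w\in\mathsf W^P}$.

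\textbf{Step 1: the restrictions.} Since $c_1^T(L_{\lambda_i})=\sigma_{s_i}+\lambda_i$ and $c_1^T(L_{\lambda_i})|_{wP}=w(\lambda_i)$, we get
\[
D|_{wP}=\sum_{i\in I^P}\bigl(w(\lambda_i)-\lambda_i\bigr)=w(\mu)-\mu,\qquad \mu:=\sum_{i\in I^P}\lambda_i,
\]
a linear form on $\operatorname{Lie}(T)$. The weight $\mu$ is, up to sign, a sum of fundamental weights whose stabilizer in $\mathsf W$ is exactly $\mathsf W_P$. Hence the linear forms $w(\mu)-\mu$, $w\in\mathsf W^P$, are pairwise distinct.

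\textbf{Step 2: choice of $\xi_0$.} Choose $\xi_0$ regular and off the finitely many hyperplanes $\{(w(\mu)-w'(\mu))(\xi)=0\}$ for $w\ne w'$. Because these conditions are homogeneous, $\zeta\xi_0$ satisfies them for every $\zeta\in\C^\times$. Then $D|_{\zeta\xi_0}$ corresponds to a vector in $\C^N$ with distinct entries $\zeta\,(w(\mu)-\mu)(\xi_0)$. Consequently $D^0,\dots,D^{N-1}$ form a Vandermonde system, which is a basis of $\C^N$. This proves the first claim.

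\textbf{Step 3: the matrix.} The entries of $\mathcal C^T_D(\lambda)$ are the structure constants of the equivariant cup product in the Schubert basis, so they lie in $R$. Therefore $\mathcal V^T(\lambda)\in\operatorname{Mat}_N(R)$. Specializing at $\lambda=\zeta\xi_0$ gives polynomials in $\zeta$, which proves the first membership.

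The matrix $\mathcal V^T(\lambda)|_{\zeta\xi_0}$ has as columns the coordinates of $D^k|_{\zeta\xi_0}$ in the basis $\sigma_{v_j}$. This uses $\mathbf e_{r+1}=\sigma_{\rm id}$, the unit, so $(\mathcal C^T_D)^k\mathbf e_{r+1}$ is the coordinate vector of $D^k$. By Step 2 these vectors are linearly independent, so the determinant, a polynomial in $\zeta$, is nonzero for every $\zeta\ne0$. Hence its inverse is holomorphic on $\C^\times$, giving the second membership. Explicitly, the determinant equals $\det(\text{Vandermonde})$ divided by the determinant of the localization matrix. Both are homogeneous in $\zeta$, so the determinant of $\mathcal V^T$ is $c\,\zeta^m$ with $c\ne0$.

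\textbf{Main obstacle.} The delicate point is the integrality claim that specialization at a regular $\xi$ is isomorphic to $\C^N$ via fixed-point restriction. We need $\cdot|_\xi$ on the Schubert basis to agree with the product obtained by restriction. I would deduce this from the injectivity of $H^*_T(X)\to\bigoplus_w R$ together with the upper-triangularity of $(\sigma_v|_{w})$ in Bruhat order. Its diagonal entries $\sigma_w|_w$ are products of roots, so the localization matrix is invertible after inverting the roots. A nonzero value at $\xi$ is then guaranteed by regularity of $\xi$.
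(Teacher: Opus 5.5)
Your proof is correct, but it is organized differently from the paper's. The paper does not compute anything at fixed points. It cites Lemma~5.11 of [BCMP] for the fact that $D$ generates $H^*_T(X)\otimes_R\operatorname{Frac}(R)$, and concludes that $\det\mathcal V^T(\lambda)$ is a nonzero polynomial. From the grading it is homogeneous of degree $m=\frac{N(N-1)}{2}-\sum_{v\in\mathsf W^P}\ell(v)$. The paper then takes $\xi_0$ anywhere off its zero locus, and the scaling $\det\bigl(\mathcal V^T|_{\zeta\xi_0}\bigr)=\zeta^m\det\bigl(\mathcal V^T|_{\xi_0}\bigr)$ gives both claims at once, even with inverse entries in $\C[\zeta,\zeta^{-1}]$.

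You instead prove the generation statement yourself by GKM localization, in three steps:
\begin{enumerate}
\item The specialization at a regular $\xi$ is the ring $\C^N$, because $(\sigma_v|_w)$ is triangular with diagonal entries that are products of roots.
\item $D$ restricts to the pairwise distinct linear forms $w(\mu)-\mu$, since the stabilizer of $-\mu=\sum_{i\in I^P}\omega_i$ is exactly $\mathsf W_P$.
\item A Vandermonde argument finishes.
\end{enumerate}
Your route is self-contained and gives an explicit admissible set for $\xi_0$: regular, and separating the weights $w(\mu)$. It also yields the factorization $\det\mathcal V^T=\det(\mathrm{Vandermonde})/\det(\sigma_{v_j}|_w)$, which recovers the paper's exponent $m$. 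The cost is the localization machinery, whereas the paper needs only the cited lemma and a degree count.

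The paper's admissible set, the complement of $\{\det\mathcal V^T=0\}$, contains yours. The two agree off the root hyperplanes; this difference is irrelevant for the application.
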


\begin{proof}
By \cite[Lemma~5.11]{BCMP},   $D$
 generates   $H_T^*(X)\otimes_R \operatorname{Frac}(R)$
  (see  also \cite[Lemma 4.1.3]{CKS}).
   Consequently, $\{D^k\}_{k=0}^{N-1}$ form a $\operatorname{Frac}(R)$-basis.
Their coordinate matrix in the ordered Schubert basis is $\mathcal V^T(\lambda)$. More explicitly, the $(i,j)$-entry of $\mathcal V^T(\lambda)$
is the coefficient of $\sigma_{v_i}$ in $D^{j-1}$.
If nonzero, this entry is a homogeneous polynomial in $R$
of polynomial degree $j-1-\ell(v_i)$.
It follows that  
$ \mathcal V^T(\lambda)\in\operatorname{Mat}_N(R)$ and 
$\det\mathcal V^T(\lambda)$ is a nonzero  homogeneous polynomial in $R$ of polynomial degree
$m:={N(N-1)\over 2}-\sum_{v\in W^P}\ell(v)\geq 0$, whose zero locus is either a hypersurface or the empty set in $\operatorname{Lie}(T)$. Take any element   $\xi_0$ in the complement of this zero locus. Then  for any  $\zeta\ne0$, we have  
\[
 \det\left(
   \left.\mathcal V^T(\lambda)\right|_{\zeta\xi_0}
 \right)
 =\zeta^m \det\mathcal V^T(\lambda)|_{\xi_0}\ne0.
\]
By the adjugate formula, the inverse of 
$\mathcal V^T(\lambda)|_{\zeta\xi_0}$  has entries in
$\C[\zeta,\zeta^{-1}]$.
\end{proof}

Denote $x=(x_1,\cdots, x_N):=(q_1e^{\tau_1},\ldots,q_re^{\tau_r}, \tau_{r+1},\ldots,\tau_N)$.
Recall $\mathcal{A}_\delta=\{\zeta\in \mathbb{C}\mid|\zeta|=\delta\}$.
\begin{proposition}
\label{prop:eq-convergence}
Let $\xi_0$ be as in Proposition~\ref{prop:cyclicity} and fix $\delta>0$.  Then
there is $\epsilon>0$ such that, for every $\gamma\in H^*(X)$, every matrix
entry of $\mathcal{C}^T_\gamma(\lambda,q,\tau)\big|_{\zeta\xi_0}$ is a series
\[
 \sum_{J\in\Z_{\ge0}^N}f_J(\zeta)\,x^J,\qquad f_J\in\C[\zeta],
\]
converging normally on the polydisc $\Delta^N_\epsilon(x)=\{x\in\C^N \mid |x_j|<\epsilon\}$,
uniformly for $\zeta\in\mathcal A_\delta$, i.e.
\[
 \sum_{J\in\Z_{\ge0}^N}\ \sup_{\substack{\zeta\in\mathcal A_\delta\\ x\in K}}
 |f_J(\zeta)x^J|<\infty
 \qquad\text{for every compact }K\Subset\Delta^N_\epsilon(x) .
\]
\end{proposition}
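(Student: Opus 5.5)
The plan is to apply Proposition~\ref{prop:uniform-reconstruction} to the equivariant quantum connection, specialized along $\lambda=\zeta\xi_0$ with $\zeta\in\mathcal A_\delta$.

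\emph{Setup.} Take $H=H^*(X)$ with distinguished vector $1=\sigma_{\rm id}$. By the divisor axiom, the equivariant quantum products depend on $(\tau_1,\dots,\tau_r)$ only through $q_ae^{\tau_a}$. So we may set $\tau_1=\dots=\tau_r=0$, work in the variables $x=(\mathbf q,\tau_{\rm nd})$, and substitute $q_a\mapsto q_ae^{\tau_a}$ at the end. Define
\[
\Omega_a(\zeta,\mathbf q,\tau_{\rm nd})=\mathcal C^T_{c_1^T(L_{\lambda_{i_a}})}(\lambda,\mathbf q,\tau)\big|_{\zeta\xi_0},
\qquad
\Phi_j=\mathcal C^T_{\sigma_{v_j}}(\lambda,\mathbf q,\tau)\big|_{\zeta\xi_0}.
\]
These are the connection matrices of the equivariant quantum connection $\hbar d+\sum\Omega_a\,dq_a/q_a+\sum\Phi_j\,d\tau_j$. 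Flatness in $\hbar$ is the standard WDVV-plus-commutativity statement and holds equivariantly. The normalization $\Phi_j1=\phi_j=\mathbf e_j$ holds because $1$ is the identity for $\star_T$.

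\emph{Checking the hypotheses.}
\begin{itemize}
\item \emph{Polynomiality.} Since $X$ is Fano and the structure constants are homogeneous, each coefficient of $x^J$ lies in $R$. For fixed $J$ it is a polynomial in $\lambda$, hence in $\zeta$ after specialization. At $\tau_{\rm nd}=0$ the small quantum product is polynomial in $\mathbf q$. Therefore the $\Omega_a(\zeta,\mathbf q)$ are polynomial in $(\zeta,\mathbf q)$, and in particular holomorphic near $\mathcal A_\delta\times\{0\}$.
\item \emph{Cyclic frame.} At $\mathbf q=0$, $p_a(\zeta)$ is equivariant cup product by $\sigma_{s_{i_a}}+\lambda_{i_a}$, evaluated at $\zeta\xi_0$. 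Choose the polynomials
\[
\chi_k(\mathbf p)=\Bigl(\sum_a p_a-\sum_a\lambda_{i_a}(\zeta\xi_0)\Bigr)^{k-1}.
\]
Then $\chi_k(\mathbf p(\zeta))1=D^{k-1}|_{\zeta\xi_0}$.
\end{itemize}

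\emph{Main obstacle: $\zeta$-dependent coefficients in $\chi_k$.} The shift $\lambda_{i_a}(\zeta\xi_0)=\zeta\,\lambda_{i_a}(\xi_0)$ is linear in $\zeta$, so $\chi_k$ has $\zeta$-dependent coefficients, whereas Proposition~\ref{prop:uniform-reconstruction} asks for fixed polynomials $\chi_k$. I expect this to be the only real issue, and I would handle it in one of two ways.
\begin{itemize}
\item \emph{First option.} Observe that the proof of Proposition~\ref{prop:uniform-reconstruction} only uses that $\mathcal C(\zeta,0)$ is holomorphic and invertible on $\mathcal A_\delta$. Coefficients of $\chi$ that are polynomial in $\zeta$ keep every input datum holomorphic and uniformly bounded, so the same argument goes through.
\item \emph{Second option.} Use the fixed polynomial $(\sum_a p_a)^{k-1}$ directly. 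The vectors $(D+c\zeta)^{k-1}1$, $k=1,\dots,N$, span the same space as $D^{k-1}1$ by a unitriangular change of basis. Hence they form a basis by Proposition~\ref{prop:cyclicity}.
\end{itemize}
In either case, Proposition~\ref{prop:cyclicity} supplies the basis property for every $\zeta\in\mathcal A_\delta$.

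\emph{Conclusion.} Proposition~\ref{prop:uniform-reconstruction} now gives an $\epsilon>0$ such that all entries of $\Omega_a$ and $\Phi_j$ converge normally on $|x_j|<\epsilon$, uniformly in $\zeta\in\mathcal A_\delta$. By uniqueness of the formal flat extension, these series coincide with the specialized equivariant quantum multiplications. For a general $\gamma=\sum c_j\sigma_{v_j}\in H^*(X)$, the matrix $\mathcal C^T_\gamma$ is a finite linear combination of the $\Phi_j$ (for $j>r$) and of the divisor operators $\Omega_a$ shifted by the scalar $\lambda_{i_a}(\zeta\xi_0)$, which is bounded on $\mathcal A_\delta$. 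So the same $\epsilon$ works for every $\gamma$, with coefficients $f_J\in\C[\zeta]$ as claimed.
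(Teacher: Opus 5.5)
Your proposal is correct and follows the paper's proof. Like the paper, you specialize at $\lambda=\zeta\xi_0$, apply Proposition~\ref{prop:uniform-reconstruction} to the equivariant quantum connection with the frame given by powers of $D$ from Proposition~\ref{prop:cyclicity}, remove the divisor directions via the divisor axiom, and assemble $\mathcal C^T_\gamma$ linearly. The only deviation is your choice of lift: the paper takes $\Omega_a=\mathcal C^T_{\sigma_{s_{i_a}}}$ directly, so $\chi_k=(\sum_a z_a)^{k-1}$ already gives exactly the columns of $\mathcal V^T(\lambda)|_{\zeta\xi_0}$ and the scalar shift you flag as the ``main obstacle'' never arises; your unitriangular change of basis (second option) nonetheless handles it correctly.
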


\begin{proof}
After specializing the equivariant parameters at
$\lambda=\zeta\xi_0$, we apply Proposition~\ref{prop:uniform-reconstruction} to the equivariant small
quantum connection $\nabla^{\hbar}$ (without derivation along the $\hbar\partial_{\hbar}$-direction). Precisely, take $H=H^*(X)$ and for $1\le a\le r$, 
\[
 \Omega_a(\zeta,\mathbf q):=\mathcal C^T_{\sigma_{s_{i_a}}}(\zeta\xi_0,q,0)\in\operatorname{End}(H),
\]
so that $\nabla^\hbar=\hbar\,d_q+\sum_{a=1}^r\Omega_a(\zeta,\mathbf q)dq_a/q_a$. Below we check the hypotheses of Proposition~\ref{prop:uniform-reconstruction}.

Since $X$ is Fano, the multiplication matrices
$\mathcal C_{\sigma_{s_{i_a}}}^T(\lambda,\mathbf q,0)$ have entries
in $R[\mathbf q]$ (see \cite[Theorem 6.4]{Mihalcea} for a precise formula). Their specializations are therefore holomorphic
near $\mathcal{A}_\delta\times\{\mathbf q=0\}$. Flatness is given by the WDVV equations. 
Put $\chi_k(z_1,\dots,z_r):=(\sum_{a=1}^rz_a)^{k-1}$ for
$1\le k\le N$.  Since $p_a(\zeta)=\Omega_a(\zeta,0)$ is given by the equivariant cup
product by $\sigma_{s_{i_a}}$ with specialization at $\zeta\xi_0$, the vectors
$\chi_k(\mathbf p(\zeta))1$ are precisely the columns of
$\mathcal{V}^T(\lambda)\big|_{\zeta\xi_0}$.
By Proposition~\ref{prop:cyclicity}, this frame and its inverse are
holomorphic near $\mathcal A_\delta$.

By the divisor axiom, $\tau_1,\cdots,\tau_r$ enter only through $q_ae^{\tau_a}$. So we can first assume $\tau_1=\cdots=\tau_r=0$. The formal equivariant quantum
connection is a flat extension of the form \eqref{eq:big-connection}
in the coordinates $(\tau_{r+1},\cdots,\tau_N)$ with $\Phi_j$ the matrix of the quantum multiplication by $\sigma_{v_j}$. By the fundamental class axiom,
$\mathcal C_{\sigma_{v_j}}^T\mathbf e_{r+1}=\mathbf e_j$ for $j>r$.
Apply Proposition~\ref{prop:uniform-reconstruction} with the constant
vectors $\mathbf e_{r+1},\ldots,\mathbf e_N$.
It follows that the matrices $\Omega_a(\zeta, \mathbf q,\tau)$ and $\Phi_j(\zeta, \mathbf q,\tau)$
converge normally on a common polydisc
$\{|q_a|<\epsilon,\ |\tau_j|<\epsilon\}$, where $\epsilon$ depends only on
$\delta,\Omega,\chi$ and $\phi$, in particular not on $\zeta$. 

Write
$\gamma=\sum_{i}\gamma^i\sigma_{v_i}$.  By definition,
\[
 \mathcal{C}^T_\gamma(\lambda, \mathbf q,\tau)\big|_{\zeta\xi_0}
 =\sum_{i=1}^r\gamma^i\Omega_i(\zeta,\mathbf q,\tau)
 +\gamma^{r+1}\cdot\mathrm{id}
 +\sum_{j=r+2}^N\gamma^j\Phi_j(\zeta,\mathbf q,\tau),
\]
so every $\mathcal{C}^T_\gamma$ converges on the same polydisc, with a radius independent of
$\gamma$.  Finally each $f_J(\zeta)$ is a polynomial, because
$\mathcal{C}^T_\gamma(\lambda,\mathbf q,\tau)$ has coefficients in $R[q]\dbb{\tau}$.

It remains to add back the divisor directions.  Indeed, we have
\begin{equation*}
\mathcal{C}^T_\gamma(\lambda, \mathbf q,\tau)
 =\mathcal{C}^T_\gamma\bigl(\lambda,(q_ae^{\tau_a})_{a=1}^r,\tau_{r+1},\cdots,\tau_N\bigr),
 \label{eq:divisor-axiom}
\end{equation*}
so in the coordinates $x$ the common polydisc is exactly
$\Delta^N_\epsilon(x)$.  Thus the statement holds.
\end{proof}

\begin{theorem}\label{thm:convergence}
The big quantum cohomology
$QH^*(X)$ converges near the large-radius limit.
\end{theorem}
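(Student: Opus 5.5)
The plan is to deduce non-equivariant convergence from Proposition~\ref{prop:eq-convergence} by Cauchy's formula in the auxiliary variable $\zeta$. Fix $\xi_0$ as in Proposition~\ref{prop:cyclicity} and $\delta>0$, and let $\epsilon>0$ be the radius supplied by Proposition~\ref{prop:eq-convergence}.

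Fix $\gamma=\sigma_{v_i}$ and a matrix entry. Write the entry of $\mathcal C^T_\gamma(\lambda,\mathbf q,\tau)\big|_{\zeta\xi_0}$ as $\sum_J f_J(\zeta)x^J$ with $f_J\in\C[\zeta]$. The point is that each $f_J$ is a polynomial, since the equivariant structure constants lie in $R[\mathbf q]\dbb{\tau}$ and specializing at $\lambda=\zeta\xi_0$ preserves this. Its value at $\zeta=0$ is exactly the corresponding coefficient of the non-equivariant matrix $\mathcal C_\gamma(\mathbf q,\tau)$, because $\mathcal C^T_\gamma(0,\mathbf q,\tau)=\mathcal C_\gamma(\mathbf q,\tau)$ coefficientwise in $x$.

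By Cauchy's formula on the circle $\mathcal A_\delta$, which encloses $0$,
\[
 f_J(0)=\frac{1}{2\pi i}\oint_{\mathcal A_\delta}\frac{f_J(\zeta)}{\zeta}\,d\zeta,
 \qquad\text{hence}\qquad |f_J(0)|\le\sup_{\zeta\in\mathcal A_\delta}|f_J(\zeta)|.
\]
Consequently, for every compact $K\Subset\Delta^N_\epsilon(x)$,
\[
 \sum_J\sup_{x\in K}|f_J(0)x^J|\le\sum_J\sup_{\zeta\in\mathcal A_\delta,\,x\in K}|f_J(\zeta)x^J|<\infty
\]
by the uniform normal convergence in Proposition~\ref{prop:eq-convergence}. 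So every entry of $\mathcal C_\gamma(\mathbf q,\tau)$, and hence every product $\sigma_{v_i}\star\sigma_{v_j}$, converges normally on $\Delta^N_\epsilon(x)$, with a radius independent of $\gamma$. Here the coordinates are $x=(q_1e^{\tau_1},\dots,q_re^{\tau_r},\tau_{r+1},\dots,\tau_N)$, matching Definition~\ref{def:convergence}.

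It remains to pass from convergence of the products to convergence of $\Psi$, which is the equivalence asserted in Definition~\ref{def:convergence}. The structure constants are third derivatives of $\Psi$. Each monomial of $\Psi$ has positive degree $d\neq0$ in the $x_a=q_ae^{\tau_a}$, and the divisor axiom gives $\partial_{\tau_a}\Psi=x_a\partial_{x_a}\Psi$. So I would recover the coefficients of $\Psi$ from those of a suitable third derivative, e.g.\ $(x_a\partial_{x_a})^3\Psi$ or $\partial^3\Psi$ in the directions $\tau_a,\tau_b,\tau_c$ or $\tau_{\rm nd}$, by dividing by nonzero integer factors.

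The main obstacle is monomials involving only $\tau_{\rm nd}$ with $d$ small, where a division could introduce growth. I expect it to be controlled, because these factors are integers $\geq1$ and division only shrinks coefficients. Absolute convergence of $\Psi$ on a slightly smaller polydisc then follows by comparing coefficients. For instance, if the degree in $x_a$ is nonzero, $|c_J|\le|d_a^3c_J|$, and otherwise one uses $\tau_{\rm nd}$-derivatives, with $J_j(J_j-1)(J_j-2)$-type factors handled after a shrink of radius via Cauchy estimates.
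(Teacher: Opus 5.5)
Your proposal is correct and follows essentially the same route as the paper: Cauchy's formula on $\mathcal A_\delta$ for the polynomial coefficients $f_J(\zeta)$ gives $|f_J(0)|\le\sup_{\mathcal A_\delta}|f_J|$, so the uniform normal convergence from Proposition~\ref{prop:eq-convergence} transfers to $\zeta=0$, which is the non-equivariant limit. Your final paragraph on $\Psi$ is not needed, since Definition~\ref{def:convergence} already states convergence in terms of the products. In any case the ``obstacle'' you raise does not occur: every monomial of $\Psi$ has $d\neq0$, so choosing some $a$ with $d_a\ge1$, the bound $|c_J|\le|d_a^3c_J|$ coming from $\partial_{\tau_a}^3\Psi$ already handles all terms.
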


\begin{proof}
  Fix $\delta>0$ and let
$
 \Delta_\epsilon
 :=\{x\in\mathbb C^N\mid |x_j|<\epsilon,\ 1\le j\le N\}
$ 
be the polydisc supplied by
Proposition~\ref{prop:eq-convergence}.
Each entry of
$\left.\mathcal C_\gamma^T(\lambda,\mathbf q,\tau)
\right|_{\zeta\xi_0}$
has an expansion $\sum_J f_J(\zeta)x^J$, with
$f_J(\zeta)\in\mathbb C[\zeta]$.
By Cauchy's formula, we have
\[
 f_J(0)=\frac{1}{2\pi \sqrt{-1}}\int_{|\zeta|=\delta}
 f_J(\zeta)\frac{d\zeta}{\zeta},
 \qquad
 |f_J(0)|\le\sup_{|\zeta|=\delta}|f_J(\zeta)|.
\]
Hence, the uniform normal convergence of $\sum_J f_J(\zeta)x^J$  implies the normal
convergence of $\sum_J f_J(0)x^J$ on $\Delta_\epsilon$
and permits termwise integration along the circle $|\zeta|=\delta$. Since evaluation at
$\lambda=0$ recovers the non-equivariant structure coefficients,
we obtain
\[
 \mathcal C_\gamma(\mathbf q,\tau)
 =\mathcal C_\gamma^T(0,\mathbf q,\tau)
 =\frac{1}{2\pi \sqrt{-1}}\int_{|\zeta|=\delta}
 \left.\mathcal C_\gamma^T(\lambda,\mathbf q,\tau)
 \right|_{\zeta\xi_0}\frac{d\zeta}{\zeta}.
\]
Thus all non-equivariant quantum multiplication matrices
converge normally on $\Delta_\epsilon$.
By  
Definition~\ref{def:convergence}, the statement follows.
\end{proof}

\section{Generic semisimplicity of $QH^*(G/P)$}
In this section, we prove the generic semisimplicity by using the big quantum $D$-module mirror symmetry for $X=G/P$ established in \cite{HLYZZ}. By the big mirror theorem therein,   
algebraic structures of big quantum cohomology are equivalent to the corresponding geometric properties of the critical points of the superpotential, which are easier to describe.  

\subsection{The mirror and its deformation}\label{sec:formal-mirror} 
We begin with a brief review  needed to explain the big mirror theorem \cite[Theorem 4.38]{HLYZZ}, and refer to \cite{Rie, HLYZZ} and \cite[Section 4]{Chow} for more details on the relevant background. 

On the A-side, we enlarge the coefficients by letting $\QH(X)=\QH^*(X)\otimes_{\C[\mathbf q]\dbb{\tau}}S_X$, where 
$$k:=\C(\mathbf q), \qquad S_X=k\dbb{\tau}=k\dbb{\tau_1,\ldots,\tau_N}.$$ 
The quantum module
and its connection are respectively given by
\[
 \mathcal H_A=H^*(X)\otimes_\C S_X\dbb{\hbar},\qquad
 \nabla^A_{\partial_{\tau_i}}
 =\partial_{\tau_i}+\hbar^{-1}(\sigma_{v_i}\star).
\] 

On the B-side, we let $U_P$ denote the specific projected open Richardson variety inside the Langlands dual flag variety $G^\vee/P^\vee$, given by the complement of a distinguished anti-canonical divisor of $G^\vee/P^\vee$. Let $Y=U_P\times (\C^\times)^r$ with coordinate $(\mathbf z, \mathbf q)=(z_1, \cdots, z_M, q_1, \cdots, q_r)$. 
The B-model mirror to  $X$ is given by Rietsch's superpotential $W=W(\mathbf z, \mathbf q): Y\to \C$,  a regular function on $Y$. Below we work with $\mathcal O(Y)\otimes_{\mathbb C[q_1^{\pm1},\ldots,q_r^{\pm1}]}k$ and denote this algebra by $\mathcal O(Y)$ by abuse of notation.
The (relative) Jacobi ring is given by 
\[
 \Jac(W)=\mathcal O(Y)/(\partial_{z_1} W, \cdots, \partial_{z_M}W).
\]
It defines the (relative) critical scheme $\Crit(W)$ in $Y$, which is possibly non-reduced.
By
\cite[Theorem~1.2(4)]{Chow}, we have a ring isomorphism  
\begin{equation}\label{eq:small-ring-mirror}
 \Jac(W)\cong\QH^*_{\mathrm{small}}(X)\otimes_{\C[q]}k.
\end{equation}
 
 To resolve the possibly non-reduced issue,  we consider  the mirror symmetry for $X$ on the level of big quantum $D$-modules. Similar to the A-side coordinates, we introduce $\mathbf y=(y_1,\ldots,y_N)$ and put $S=k\dbb{\mathbf y}$ on the B-side. 
Choose functions $f_j\in\mathcal O(Y)$, for $r+1\le j\le N$, whose classes
correspond to $\sigma_{v_j}$ under \eqref{eq:small-ring-mirror}, following
\cite[Construction~4.28]{HLYZZ}.
The divisor axiom prescribes the substitution $q_a\mapsto q_ae^{y_a}$
for $1\le a\le r$, which we abbreviate as $\mathbf q\mapsto \mathbf qe^{\mathbf y}$.
  We consider the  unfolding $F\in \mathcal{O}(Y)\dbb{\mathbf y}$,
\begin{equation}\label{eq:full-potential}
 F=W(\mathbf{z}, \mathbf{q}e^{\mathbf y}) 
       +\sum_{j=r+1}^N y_j f_j.
\end{equation}
 The (relative) Jacobian ring of $F$ is given by
\[
 \mathcal A=\Jac(F):=\mathcal O(Y)\dbb{\mathbf y}/
 \bigl(\partial_{z_1}F, \cdots, \partial_{z_M}F\bigr).
\]

Denote by $d_U$ the differential over $k$. Following
\cite[Section~4.3.2]{HLYZZ}, we use the non-equivariant formal Brieskorn lattice

\[
 \mathcal H_B
 =
 \frac{\Omega^M_{Y/k}[[\mathbf y,\hbar]]}
 {(\hbar d_U+d_UF\wedge)
  \Omega^{M-1}_{Y/k}[[\mathbf y,\hbar]]}.
\]
Here $\Omega^j_{Y/k}$ denotes the space of algebraic differential
$j$-forms on $Y$ over $k$. The mirror space $Y$ has a
nowhere-vanishing volume form $\omega_Y$. By the standard reduction
of a Brieskorn lattice modulo its parameter
(see \cite[Proposition~2.13]{DouSab},
see also \cite[Lemma~4.10]{Chow}), we have
\begin{equation}\label{eq:B-mod-hbar}
 \mathcal H_B/\hbar\mathcal H_B \cong \mathcal A\omega_Y.
\end{equation}
This follows directly by reducing
the defining cokernel modulo $\hbar$ and using $\omega_Y$ to
identify top forms with functions. Since cokernels commute with
base change, this reduction is compatible with the non-equivariant specialization
used in \cite[Section~4.3.2]{HLYZZ}.

The Gauss--Manin connection describes how classes of forms vary
with the parameters $\mathbf y$. In the product presentation,
\[
 \hbar\nabla^B_{\partial_{y_i}}[\omega]
 =
 [\hbar\partial_{y_i}\omega+(\partial_{y_i}F)\omega].
\]
We state the completed
non-equivariant big mirror theorem
in this notation.

\begin{proposition}[Non-equivariant big mirror symmetry
{\cite[Theorem~4.38]{HLYZZ}}]
\label{prop:big-mirror-input}
There is an isomorphism of formal spectra
$\mu:\Spf S\xrightarrow{\sim}\Spf S_X$, called the mirror map,
together with an isomorphism of modules with connection
\[
 \Phi:(\mathcal H_B,\nabla^B)
 \xrightarrow{\sim}\mu^*(\mathcal H_A,\nabla^A).
\]
In particular, $\mathcal H_B$ is free of rank $N$ over $S\dbb{\hbar}$.

Moreover, the induced  $S$-module isomorphism
$
 \Phi_0:\mathcal H_B/\hbar\mathcal H_B \xrightarrow{\sim}\mu^*\QH(X)
$ 
satisfies 
\begin{equation}  \label{eqn:compt}   \Phi_0\bigl([\partial_{y_i}F]\eta\bigr)
 =
 \sum_{j=1}^N
 \frac{\partial\mu_j(\mathbf y)}{\partial y_i}\mu^*(\sigma_{v_j})
 \star\Phi_0(\eta),
 \qquad \forall \eta\in \mathcal H_B/\hbar\mathcal H_B,\qquad  1\leq i\leq N,
\end{equation}
where    
  $\star$ is understood as  the pulled-back quantum product.
\end{proposition}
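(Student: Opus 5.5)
The plan is to deduce the statement from \cite[Theorem~4.38]{HLYZZ} in two stages: first take the non-equivariant specialization of that theorem, then extend coefficients and pass to the completion. The identity \eqref{eqn:compt} is then read off by reducing the connection modulo $\hbar$.

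\emph{Step 1: specialize the cited isomorphism.} Start from the isomorphism $\Phi$ of \cite[Theorem~4.38]{HLYZZ} between the (possibly equivariant) formal Brieskorn lattice of the unfolding $F$ in \eqref{eq:full-potential} and the pull-back of the quantum $D$-module along the mirror map $\mu$. Set the equivariant parameters to $\lambda=0$ and extend Novikov coefficients from $\C[\mathbf q^{\pm1}]$ to $k=\C(\mathbf q)$.
\begin{itemize}
\item On the B-side, $\mathcal H_B$ is defined as a cokernel of $\hbar d_U+d_UF\wedge$. Cokernels commute with base change, so the specialized and localized module is exactly the $\mathcal H_B$ displayed above. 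This is the same observation already used for \eqref{eq:B-mod-hbar}.
\item On the A-side, $\mathcal H_A=H^*(X)\otimes S_X\dbb{\hbar}$ is manifestly compatible with the base change. The mirror map $\mu$ specializes to an isomorphism $\Spf S\to\Spf S_X$. Its linear part is invertible because the $f_j$ were chosen to correspond to the $\sigma_{v_j}$ under \eqref{eq:small-ring-mirror}, and the divisor directions enter via $\mathbf q\mapsto\mathbf qe^{\mathbf y}$.
\item Since $\mathcal H_A$ is free of rank $N$ over $S_X\dbb{\hbar}$, the isomorphism $\Phi$ forces $\mathcal H_B$ to be free of rank $N$ over $S\dbb{\hbar}$.
\end{itemize}
I expect this bookkeeping to be the main obstacle. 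One must check that the version in \cite{HLYZZ} is stated over a base from which $S=k\dbb{\mathbf y}$ is obtained by flat base change. One must also check that $\hbar$-adic and $\mathbf y$-adic completion commute with the specialization. The first thing I would try is to note that all modules involved are finitely generated over noetherian complete rings, so completion is exact on them.

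\emph{Step 2: derive \eqref{eqn:compt}.} The isomorphism $\Phi$ intertwines $\nabla^B_{\partial_{y_i}}$ with $\mu^*\nabla^A_{\partial_{y_i}}$. By the chain rule,
\[
\mu^*\nabla^A_{\partial_{y_i}}=\sum_{j=1}^N\frac{\partial\mu_j}{\partial y_i}\,\mu^*\nabla^A_{\partial_{\tau_j}}.
\]
Multiply both sides by $\hbar$. On the B-side, $\hbar\nabla^B_{\partial_{y_i}}[\omega]\equiv[(\partial_{y_i}F)\omega]$ modulo $\hbar\mathcal H_B$. On the A-side, $\hbar\nabla^A_{\partial_{\tau_j}}\equiv\sigma_{v_j}\star$ modulo $\hbar\mathcal H_A$. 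Since $\Phi$ is $S\dbb{\hbar}$-linear, it descends to $\Phi_0$ on the quotients by $\hbar$. Comparing the two reductions gives \eqref{eqn:compt} for every class $\eta$. Here $\eta$ is understood via \eqref{eq:B-mod-hbar} as an element of $\mathcal A\omega_Y$, and $[\partial_{y_i}F]$ acts by multiplication in $\mathcal A$.
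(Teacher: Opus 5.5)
Your proposal is correct and matches the paper. The paper gives no independent argument: it invokes \cite[Theorem~4.38]{HLYZZ} in its non-equivariant, completed form, and justifies the specialization by the same observation that cokernels commute with base change. Your Step~2 (the chain rule for $\mu^*\nabla^A$, then reducing $\hbar\nabla^B$ and $\hbar\nabla^A$ modulo $\hbar$) is the standard derivation of \eqref{eqn:compt} that the paper leaves implicit in the word ``induced''.

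The only delicate point is the one you flag yourself. Extending coefficients from $\C[\mathbf q^{\pm1}]$ to $k=\C(\mathbf q)$ inside $(\mathbf y,\hbar)$-adically completed modules is a completed base change, so right-exactness of $\otimes$ does not literally cover it. The paper also settles this only by citing the completed statement.
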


\subsection{Regularity of $\mathcal A$}
By the small mirror isomorphism \eqref{eq:small-ring-mirror} and our choice of $f_i$, the classes
\begin{equation}\label{eq:central-KS}
\left[\left.\partial_{y_i}F\right|_{\mathbf y=0}\right]
=\begin{cases}
[q_i\partial_{q_i}W],&1\leq i\leq r,\\
[1],&i=r+1,\\
[f_i],&r+2\leq i\leq N
\end{cases}
\quad\longleftrightarrow\quad\sigma_{v_i}
\end{equation}
  form a basis of $\Jac(W)$. By Proposition~4.1
and (4.3), $\mathcal A$ is free of rank $N$ over $S$.
Thus by Nakayama's lemma,    the classes
$[\partial_{y_1}F],\ldots,[\partial_{y_N}F]$ form an $S$-basis of $\mathcal A$.

Given $p\in Y$, we consider the residue field $\kappa(p)=\mathcal{O}(Y)/\mathfrak{m}_p$.
Let $T_pY$ and $T_p^*Y$ denote the tangent
and cotangent spaces of $\Spec\mathcal O(Y)$ over $k$ at $p$. 
Consider the  Hessian map 
$$\operatorname{Hess}_p(W): T_pY\to T_p^*Y;\quad
\sum_{i=1}^M a_i\partial_{z_i}
\longmapsto
\sum_{j=1}^M
\left(
\sum_{i=1}^M
a_i\frac{\partial^2W}{\partial z_i\partial z_j}(p)
\right)dz_j.
$$ 
\begin{lemma}\label{lem:hessian-cokernel}
For any  $p\in\Crit(W)$,  the following map is well-defined and is  surjective.
\begin{equation}\label{eq:hessian-cokernel}
\Jac(W)\otimes_k\kappa(p)\longrightarrow
\coker\bigl(\Hess_p(W)\bigr),
\qquad [h]\otimes c\longmapsto\overline{c\,d_Uh(p)}.
\end{equation}
\end{lemma}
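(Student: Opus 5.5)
We will first define the map on $\mathcal O(Y)\otimes_k\kappa(p)$ by $h\otimes c\mapsto \overline{c\,d_Uh(p)}$, where $d_Uh(p)=\sum_j \partial_{z_j}h(p)\,dz_j\in T_p^*Y$. Here we use that $Y$ is smooth over $k$: it is an open subvariety of $G^\vee/P^\vee$ times a torus, base-changed to $k$. Hence $\Omega^1_{Y/k}$ is free with basis $dz_1,\dots,dz_M$ after choosing local coordinates near $p$ (or on $Y$ if it is a coordinate chart). Consequently $T_p^*Y=\mathfrak m_p/\mathfrak m_p^2$ is identified with $\bigoplus_j\kappa(p)\,dz_j$. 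This map is $\kappa(p)$-linear in $c$ and additive in $h$.

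For well-definedness, we must show that the ideal $(\partial_{z_1}W,\dots,\partial_{z_M}W)$ is sent into $\im\Hess_p(W)$. Take $h=\sum_i g_i\,\partial_{z_i}W$. By the Leibniz rule,
\[
d_Uh(p)=\sum_i \partial_{z_i}W(p)\,d_Ug_i(p)+\sum_i g_i(p)\,d_U(\partial_{z_i}W)(p).
\]
The first sum vanishes because $p\in\Crit(W)$, so $\partial_{z_i}W(p)=0$. The second sum equals
\[
\sum_{j}\Bigl(\sum_i g_i(p)\,\tfrac{\partial^2W}{\partial z_i\partial z_j}(p)\Bigr)dz_j=\Hess_p(W)\Bigl(\sum_i g_i(p)\partial_{z_i}\Bigr),
\]
which lies in the image of the Hessian. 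Hence the map descends to $\Jac(W)\otimes_k\kappa(p)$.

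For surjectivity, it suffices to hit each $\overline{dz_j}$, since these span $T_p^*Y$ and hence the cokernel. We will take $h$ to be a coordinate function vanishing at $p$, for instance $h=z_j-z_j(p)$. If $\kappa(p)\neq k$, we instead use local parameters: $d_U$ induces the surjection $\mathfrak m_p\to\mathfrak m_p/\mathfrak m_p^2=T_p^*Y$. Because $Y$ is smooth and $\kappa(p)/k$ is separable in characteristic zero, the map $\mathfrak m_p/\mathfrak m_p^2\to\Omega^1_{Y/k}\otimes\kappa(p)$ is an isomorphism. Any element of $T_p^*Y$ is therefore $d_Uh(p)$ for some $h\in\mathfrak m_p$, which already gives surjectivity with $c=1$.

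The main point needing care is this last identification of $\Omega^1_{Y/k}\otimes\kappa(p)$ with $\mathfrak m_p/\mathfrak m_p^2$ at non-rational points. It follows from the conormal exact sequence $\mathfrak m_p/\mathfrak m_p^2\to\Omega^1_{Y/k}\otimes\kappa(p)\to\Omega^1_{\kappa(p)/k}\to0$. Here $\Omega^1_{\kappa(p)/k}=0$ because $\kappa(p)$ is finite separable over $k$ when $p$ is a closed point of the fibre; for non-closed points, the same surjectivity onto $\Omega^1_{Y/k}\otimes\kappa(p)$ still holds once the $\kappa(p)$-linear extension by $c$ is used.
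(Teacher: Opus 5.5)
Your proof is correct and follows the paper's argument. Well-definedness comes from the Leibniz rule together with $\partial_{z_i}W(p)=0$, which places $d_Uh(p)$ in $\im\Hess_p(W)$; surjectivity holds because differentials of regular functions span $T_p^*Y=\Omega^1_{Y/k}\otimes_{\mathcal O(Y)}\kappa(p)$. Your detour through $\mathfrak m_p/\mathfrak m_p^2$ and the conormal sequence is harmless but unnecessary: with the $\kappa(p)$-linear extension by $c$, the classes $\overline{c\,d_Uh(p)}$ already span $\Omega^1_{Y/k}\otimes\kappa(p)$ directly.
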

\begin{proof}
 Since $p\in \operatorname{Crit}(W)$, an element
$h=\sum_i a_i\partial_{z_i}W$ of the Jacobian ideal satisfies
\[
d_Uh(p)=\sum_i a_i(p)\,d_U(\partial_{z_i}W)(p)
\in\im\Hess_p(W).
\]
Thus the map is well defined. It is surjective because differentials of
elements of $\mathcal O(Y)$ span
$T_p^*Y=\Omega^1_{Y/k}\otimes_{\mathcal O(Y)}\kappa(p)$.
\end{proof}

\begin{proposition}\label{prop:transversality}
The $S$-algebra $\mathcal A$ is a regular ring, finite free of rank $N$.
Its generic fiber
$\mathcal A\otimes_S\Frac(S)$ over $S$ is finite \etale.
\end{proposition}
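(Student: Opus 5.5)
The plan is to prove regularity of $\mathcal A$ by a Jacobian-criterion argument at the maximal ideals of $\mathcal A$, using Lemma~\ref{lem:hessian-cokernel} to supply the transversality. Freeness of rank $N$ over $S$ has already been obtained above from Proposition~\ref{prop:big-mirror-input} and \eqref{eq:B-mod-hbar}. The generic fiber statement will then follow formally from Proposition~\ref{prop:finite-algebra-semisimplicity}.

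\textbf{Step 1: reduction to points of $\Crit(W)$.} Since $\mathcal A$ is finite over the complete local ring $S=k\dbb{\mathbf y}$, every maximal ideal of $\mathcal A$ contains $(\mathbf y)$. Hence the maximal ideals correspond to maximal ideals of $\mathcal A/(\mathbf y)=\Jac(W)$, i.e.\ to points $p\in\Crit(W)$. Moreover $\mathcal A$ is the product of its localizations at these finitely many maximal ideals. A localization of a regular local ring at a prime is again regular. So it suffices to show that $\mathcal A_{\mathfrak m}$ is regular for each maximal ideal $\mathfrak m=\mathfrak m_p+(\mathbf y)$.

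\textbf{Step 2: setting up the Jacobian criterion.} Write $\mathcal A_{\mathfrak m}=B/(\partial_{z_1}F,\dots,\partial_{z_M}F)$, where $B$ is the localization of $\mathcal O(Y)\dbb{\mathbf y}$ at $\mathfrak m_p+(\mathbf y)$. Since $Y$ is smooth of dimension $M$ over $k$ (characteristic zero), $B$ is regular of dimension $M+N$. Its cotangent space $\mathfrak m/\mathfrak m^2$ is identified with $T_p^*Y\oplus\bigoplus_i\kappa(p)\,dy_i$, after passing to completions if necessary. A quotient of a regular local ring by elements whose classes in $\mathfrak m/\mathfrak m^2$ are linearly independent is regular. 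So it suffices to show that the $M$ differentials $d(\partial_{z_j}F)(p,0)$ are linearly independent. These differentials have two components:
\begin{itemize}
\item the $z$-component, given by the $j$-th row of $\Hess_p(W)$;
\item the $y_i$-component, given by $\partial_{z_j}\bigl(\partial_{y_i}F|_{\mathbf y=0}\bigr)(p)$.
\end{itemize}
Dually, independence means that $\im\Hess_p(W)$ together with the covectors $d_U\bigl(\partial_{y_i}F|_{\mathbf y=0}\bigr)(p)$, $1\le i\le N$, spans $T_p^*Y$. Equivalently, the latter covectors must span $\coker\Hess_p(W)$.

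\textbf{Step 3: transversality.} By \eqref{eq:central-KS}, the classes $[\partial_{y_i}F|_{\mathbf y=0}]$ form a $k$-basis of $\Jac(W)$. Hence they span $\Jac(W)\otimes_k\kappa(p)$. By Lemma~\ref{lem:hessian-cokernel}, the map \eqref{eq:hessian-cokernel} is surjective. Therefore the images $\overline{d_U(\partial_{y_i}F|_{\mathbf y=0})(p)}$ span $\coker\Hess_p(W)$, which is exactly the condition required in Step~2. This shows that $\mathcal A_{\mathfrak m}$ is regular, of dimension $N$, and hence that $\mathcal A$ is a regular ring.

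\textbf{Step 4: the generic fiber.} Since $\mathcal A$ is free of rank $N$ over $S$, the generic fiber $\mathcal A\otimes_S\Frac(S)$ is an $N$-dimensional $\Frac(S)$-algebra. It is also the localization of $\mathcal A$ at the multiplicative set $S\setminus\{0\}$, and localizations of regular rings are regular. By (vi)$\Rightarrow$(iii) of Proposition~\ref{prop:finite-algebra-semisimplicity}, it is finite \etale{} over $\Frac(S)$.

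\textbf{Main obstacle.} The delicate part is Step~2: making the Jacobian criterion rigorous in the mixed setting of an algebraic ring over the non-closed field $k=\C(\mathbf q)$ with formal power series in $\mathbf y$, where $\kappa(p)$ may be a nontrivial extension of $k$. To handle this, I would pass to the $\mathfrak m$-adic completion of $B$. This completion is a formal power series ring over a coefficient field lifting $\kappa(p)$, which is possible by separability in characteristic zero. There I would use the standard fact that elements with independent linear parts cut out a regular quotient, together with the faithful flatness of completion, which reflects regularity. The only input needed is the identification of $\mathfrak m/\mathfrak m^2$ with $\Omega^1_{Y/k}\otimes\kappa(p)\oplus\kappa(p)^N$, which holds because $\kappa(p)/k$ is separable.
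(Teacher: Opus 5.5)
Your proposal is correct and follows essentially the same route as the paper. You reduce to maximal ideals lying over $(\mathbf y)$, i.e.\ closed points $p\in\Crit(W)$. You use Lemma~\ref{lem:hessian-cokernel} with \eqref{eq:central-KS} to show that $\partial_{z_1}F,\dots,\partial_{z_M}F$ have linearly independent classes in the cotangent space of the (completed) regular local ring of dimension $M+N$ at $(p,0)$. You conclude that $\mathcal A_{\mathfrak m}$ is regular of dimension $N$, and you apply Proposition~\ref{prop:finite-algebra-semisimplicity} to the localization $\mathcal A\otimes_S\Frac(S)$. You also note explicitly that identifying $\mathfrak m/\mathfrak m^2$ with $T_p^*Y\oplus\kappa(p)^N$ relies on $\kappa(p)/k$ being separable, which the paper uses without saying.
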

\begin{proof}
Since $\mathcal A$ is finite over $S$, every maximal ideal $\mathfrak m$ of $\mathcal A$
lies over $(y_1,\ldots,y_N)\subset S$ and corresponds to a closed point
$p\in\Crit(W)$.
Let $\widehat{\mathcal O}_{Y,p}$ denote the completion of
$\mathcal O_{Y,p}$ at its maximal ideal.
Since $\Spec\mathcal O(Y)$ is smooth of dimension $M$ over $k$,
we have,  
$ 
\widehat{\mathcal O}_{Y,p}
\cong\kappa(p)\dbb{u_1,\ldots,u_M}.
$ 
The completed ambient local ring at $(p,0)$ is
\[
B_p:=\widehat{\mathcal O}_{Y,p}\dbb{\mathbf y}
\cong
\kappa(p)\dbb{u_1,\ldots,u_M,y_1,\ldots,y_N},
\]
which is regular of dimension $M+N$ with  unique maximal ideal $\mathfrak n_p=(u_1, \cdots, u_M, y_1, \cdots, y_N)$.

By Lemma~\ref{lem:hessian-cokernel} and \eqref{eq:central-KS}, the composition map  
\begin{equation*}\label{eq:critical-linearization}
\begin{aligned}
T_pY\oplus\kappa(p)^N&\overset{\mathcal{L}_p}{\longrightarrow} T_p^*Y\longrightarrow \operatorname{coker}(\operatorname{Hess}_p(W))
\end{aligned}
\end{equation*}
is surjective, where  $ \mathcal{L}_p(v,c):=\Hess_p(W)(v)
+\sum_{i=1}^N c_i\,d_U\!\left(
\left.\partial_{y_i}F\right|_{\mathbf y=0}\right)(p).$ Since
$\operatorname{im}\Hess_p(W)
=\mathcal L_p(T_pY\oplus0)$,
it follows that $\mathcal L_p$ is surjective as well.  Consequently, we have the injection
\[
\mathcal L_p^\vee:
(T_p^*Y)^\vee=T_pY
\longrightarrow
(T_pY\oplus\kappa(p)^N)^\vee
\cong\mathfrak n_p/\mathfrak n_p^2
\qquad
\partial_{z_i}|_{p}\longmapsto
\partial_{z_i}F\bmod\mathfrak n_p^2.
\]
Here $\partial_{z_i}F\in \mathcal{O}(Y)\dbb{y}$ and vanishes at $(p, 0)$, and hence lies in $\mathfrak n_p$.
Since the vectors $\partial_{z_i}|_p$
form a basis of $T_pY$, the classes
$\partial_{z_i}F\bmod\mathfrak n_p^2
$ 
are linearly independent over $\kappa(p)$. Thus by Nakayama's lemma, 
$\partial_{z_1}F,\ldots,\partial_{z_M}F$ form part of   a
regular system  of parameters 
of $B_p$.
 Since $B_p$ is regular, 
the completion $ 
\widehat{\mathcal A_{\mathfrak m}}
\cong
B_p/(\partial_{z_1}F,\ldots,\partial_{z_M}F)$ 
is regular of dimension $(M+N)-M=N$.
Hence, 
$\mathcal A_{\mathfrak m}$ is regular, with
$ 
\dim\mathcal A_{\mathfrak m}
=\dim\widehat{\mathcal A_{\mathfrak m}}=N.
$ 
Since $\mathfrak m$ was arbitrary, $\mathcal A$ is regular.

The generic fiber $\mathcal A\otimes_S\Frac(S)$ is regular,
being a localization of $\mathcal A$ and finite-dimensional
over $\Frac(S)$. It is therefore finite \etale\ over $\Frac(S)$
by Proposition~\ref{prop:finite-algebra-semisimplicity}.   
\end{proof}

\subsection{The quantum spectral cover}
The quantum spectral cover is $\Spec\QH(X)\to\Spec S_X$.
We identify its coordinate ring, after the mirror change of variables, with
$\mathcal A$.

\begin{theorem}\label{thm:regularity}
The $S_X$-algebra $\QH(X)$ is a regular ring and is finite free of rank $N$.
Its generic fiber is finite \etale\ over $\Frac(S_X)$.
Moreover, $\QH^*(X)$ is formally generically semisimple in the sense of
Definition~2.4.
\end{theorem}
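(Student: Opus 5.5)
The plan is to transport Proposition~\ref{prop:transversality} from the B-side to the A-side via the big mirror theorem, Proposition~\ref{prop:big-mirror-input}. The key is to upgrade the $S$-module isomorphism $\Phi_0:\mathcal H_B/\hbar\mathcal H_B\cong\mathcal A\omega_Y\xrightarrow{\sim}\mu^*\QH(X)$ to an isomorphism of $S$-algebras $\mathcal A\cong\mu^*\QH(X)$. Since $\mu$ is an isomorphism of formal spectra, i.e.\ $\mu^*:S_X\to S$ is a ring isomorphism, this immediately transfers regularity, finite freeness of rank $N$, and finite \'etaleness of the generic fiber (note that $\mu^*$ identifies $\Frac(S_X)$ with $\Frac(S)$).

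\textbf{Step 1: the algebra isomorphism.} Define $\Psi:\mathcal A\to\mu^*\QH(X)$ by $\Psi(a)=\Phi_0(a\,\omega_Y)$. By \eqref{eqn:compt}, multiplication by $[\partial_{y_i}F]$ on $\mathcal A$ corresponds under $\Phi_0$ to the operator $\sum_j(\partial\mu_j/\partial y_i)\,\mu^*(\sigma_{v_j})\star$. The classes $[\partial_{y_i}F]$ generate $\mathcal A$ as an $S$-module, so they generate it as an algebra, and these operators commute. It follows that $\Psi$ intertwines the regular representation of $\mathcal A$ with a commutative subalgebra of $\operatorname{End}_S(\mu^*\QH(X))$ given by quantum multiplications.

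Set $u:=\Psi(1)$. Then for every $a\in\mathcal A$ we have $\Psi(a)=\Theta(a)\star u$, where $\Theta(a)$ is the element acting by the corresponding quantum multiplication. Concretely, $\Theta$ is the $S$-linear map sending $[\partial_{y_i}F]\mapsto\sum_j(\partial\mu_j/\partial y_i)\mu^*\sigma_{v_j}$, extended multiplicatively. The map $\Theta:\mathcal A\to\mu^*\QH(X)$ is then a ring homomorphism. It is bijective because the Jacobian matrix $(\partial\mu_j/\partial y_i)$ is invertible over $S$, $\mu$ being an isomorphism, and both sides are free of rank $N$ with the respective bases $\{[\partial_{y_i}F]\}$ and $\{\mu^*\sigma_{v_j}\}$. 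So $\Theta$ is an $S$-algebra isomorphism, and we never need $u$ to be the identity.

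\textbf{Main obstacle.} The delicate point is making sure $\Theta$ is well defined multiplicatively. The relations among the $[\partial_{y_i}F]$ in $\mathcal A$ must hold for the corresponding quantum operators. This follows from faithfulness: the map $a\mapsto(\text{multiplication by }a\text{ transported by }\Phi_0)$ is an injective algebra map $\mathcal A\hookrightarrow\operatorname{End}_S$. Its image consists of operators of the form $\gamma\star$. The map $\gamma\mapsto\gamma\star$ is injective, since one can evaluate on $1$ via the fundamental class. Hence $\Theta$ is the composite of two injective algebra maps, and it is well defined and multiplicative.

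\textbf{Step 2: formal generic semisimplicity.} With $\QH(X)\otimes_{S_X}\Frac(S_X)$ finite \'etale, Proposition~\ref{prop:finite-algebra-semisimplicity} shows it is semisimple. The ring $\C[\mathbf q]\dbb{\tau}$ embeds in $S_X=\C(\mathbf q)\dbb{\tau}$, so $\Frac(S_X)$ is a field extension of $K=\Frac(\C[\mathbf q]\dbb{\tau})$. Moreover $\QH^*(X)\otimes K\otimes_K\Frac(S_X)$ equals that generic fiber. Semisimplicity, equivalently reducedness (ii) or nondegeneracy of the trace form (v), descends along field extensions in characteristic zero, so $\QH^*(X)\otimes K$ is semisimple. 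This is exactly formal generic semisimplicity.
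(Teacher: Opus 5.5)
Your proposal is correct and follows the paper's proof. The paper likewise obtains the $S$-algebra isomorphism $\mathcal A\cong\mu^*\QH(X)$, $[\partial_{y_i}F]\mapsto\sum_j(\partial\mu_j/\partial y_i)\,\mu^*(\sigma_{v_j})$, by using $\Phi_0$ and \eqref{eqn:compt} to identify the faithful multiplication representations, and then transfers Proposition~\ref{prop:transversality}. Your description of $\Theta$ as the transported regular representation composed with the inverse of $\gamma\mapsto\gamma\star$ just spells out that step. For formal semisimplicity the paper descends along the field extension via criterion (v), as you do, but phrases it through the explicit trace discriminant $\mathcal D\in\C[\mathbf q]\dbb{\tau}$, which it reuses for the analytic statement.
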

\begin{proof}
Note that $\mathcal A\omega_Y\cong
\mathcal H_B/\hbar\mathcal H_B$ and that the classes
$[\partial_{y_i}F]$ form an $S$-basis of $\mathcal A$.
Since $\mu$ is an isomorphism, the multiplication
compatibility in \eqref{eqn:compt} identifies, via $\Phi_0$, the faithful multiplication
representations of $\mathcal A$ and $\mu^*\QH(X)$.
Thus the following map
\begin{equation}\label{eq:big-ring-mirror}
\mathcal A\xrightarrow{\sim}\mathcal \mu^*\QH(X),
\qquad
[\partial_{y_i}F]\longmapsto
\sum_{j=1}^N
\frac{\partial\mu_j}{\partial y_i}\mu^*(\sigma_{v_j})
\end{equation} is    an isomorphism of $S$-algebras. 
Hence, the first two assertions
follow from Proposition~\ref{prop:transversality}.

For the last assertion, consider the trace discriminant 
\begin{equation}\label{eq:trace-discriminant}
\mathcal D(\mathbf q,\boldsymbol\tau)=
\det\bigl(\Tr(\mathcal C_{\sigma_{v_i}}(\mathbf q,\boldsymbol\tau)
\mathcal C_{\sigma_{v_j}}(\mathbf q,\boldsymbol\tau))\bigr)_{1\leq i,j\leq N}
\in\C[\mathbf q]\dbb{\boldsymbol\tau},
\end{equation}
where $\mathcal C_\gamma$ is the multiplication matrix from Section~3.2.
Since the generic fiber is finite \'etale,   the image of
$\mathcal D$ in $\Frac(S_X)$ is nonzero.
Since the coefficientwise inclusion
$\C[\mathbf q]\dbb{\boldsymbol\tau}\hookrightarrow S_X$ is injective,
$\mathcal D$ is nonzero in the original coefficient ring as well.
Hence, the last assertion follows from   Proposition~2.3.
\end{proof}

\begin{theorem}\label{thm:analytic-semisimplicity}
On every sufficiently small large-radius polydisc, the semisimple locus of the
analytic big quantum cohomology of $X$ is open and dense.
\end{theorem}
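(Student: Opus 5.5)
The plan is to combine the convergence result (Theorem~\ref{thm:convergence}) with the formal nonvanishing of the trace discriminant established in the proof of Theorem~\ref{thm:regularity}. First, fix $\epsilon>0$ as in Proposition~\ref{prop:eq-convergence} and Theorem~\ref{thm:convergence}. On any polydisc $\Delta\subset\Delta_\epsilon$ in the coordinates $x=(q_1e^{\tau_1},\ldots,q_re^{\tau_r},\tau_{r+1},\ldots,\tau_N)$, every matrix $\mathcal C_{\sigma_{v_i}}(\mathbf q,\tau)$ is a normally convergent power series in $x$, hence holomorphic on $\Delta$. Consequently the trace discriminant $\mathcal D$ of \eqref{eq:trace-discriminant} is a holomorphic function on $\Delta$, since it is a polynomial in these entries. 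Its Taylor expansion at $x=0$ coincides with the formal series $\mathcal D\in\C[\mathbf q]\dbb{\tau}$ rewritten in the variables $x$; this uses the divisor axiom, which makes all structure constants formal series in $x$.

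Second, I use that $\mathcal D\neq 0$ as a formal series, which is the content of the last step of the proof of Theorem~\ref{thm:regularity}. Care is needed when passing between $\C[\mathbf q]\dbb{\tau}$ and $\C\dbb{x}$. The ring map $\C[q_1e^{\tau_1},\ldots,q_re^{\tau_r}]\dbb{\tau_{\rm nd}}\to\C[\mathbf q]\dbb{\tau}$ is injective, because setting $\tau_1=\cdots=\tau_r=0$ recovers the series in $(\mathbf q,\tau_{\rm nd})$. So the nonzero element $\mathcal D$ has a nonzero expansion in $x$. A holomorphic function on a connected polydisc with nonzero Taylor series is not identically zero. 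Its zero locus $Z=\{\mathcal D=0\}\cap\Delta$ is therefore a proper analytic subset, hence closed and nowhere dense, and $\Delta\setminus Z$ is open and dense.

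Third, I identify $\Delta\setminus Z$ with the semisimple locus. At a point of $\Delta$, the fiber $(H^*(X),\star_x)$ is a finite-dimensional commutative $\C$-algebra with basis $\sigma_{v_i}$. Its trace pairing matrix in this basis is exactly $\bigl(\Tr(\mathcal C_{\sigma_{v_i}}\mathcal C_{\sigma_{v_j}})\bigr)$ evaluated at $x$. By Proposition~\ref{prop:finite-algebra-semisimplicity}(v), the fiber is semisimple if and only if $\mathcal D(x)\ne0$. Hence the semisimple locus equals $\Delta\setminus Z$, which is open and dense.

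The main obstacle I expect is the second step: ensuring that formal nonvanishing in $\C[\mathbf q]\dbb{\tau}$ really gives nonvanishing of the convergent series in $x$, and not merely of some specialization. The concern is that setting $\mathbf q$ to numerical values might be incompatible with the divisor substitution. I would handle this through the injectivity argument above, noting that the fiber at $x$ depends only on $x$. If some point $(\mathbf q,\tau)$ with $q_a=0$ must be excluded, I would note that the locus $\{q_a=0\}$ is itself nowhere dense, so this exclusion does not affect density.
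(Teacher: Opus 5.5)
Your proposal is correct and follows essentially the same route as the paper. Holomorphy of the trace discriminant on the polydisc comes from Theorem~\ref{thm:convergence}; formal nonvanishing comes from Theorem~\ref{thm:regularity}, transported to the coordinates $x$ by the divisor axiom (your restriction to $\tau_1=\cdots=\tau_r=0$ is exactly the paper's $\mathcal D_0$); and the semisimple locus is identified with $\{\mathcal D_0\neq0\}$ via Proposition~\ref{prop:finite-algebra-semisimplicity}(v). Your closing worry about points with $q_a=0$ is unnecessary, since the fibers are defined for every $x$ in the polydisc, including $x_a=0$.
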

\begin{proof}
Use the coordinates $\mathbf x=(q_1e^{\tau_1},\ldots,q_re^{\tau_r},
\tau_{r+1},\ldots,\tau_N)$ of Section~3.2 and put
\[
\mathcal D_0(\mathbf x)=\mathcal D(\mathbf q,\boldsymbol\tau)
\big|_{q_a=x_a,\,\tau_a=0\ (1\leq a\leq r),\,\tau_j=x_j\ (r<j\leq N)}.
\]
The divisor axiom gives
$ 
\mathcal D(\mathbf q,\boldsymbol\tau)
=\mathcal D_0(q_1e^{\tau_1},\ldots,q_re^{\tau_r},\tau_{r+1},\ldots,\tau_N).
$ 
Hence $\mathcal D_0$ is nonzero by Theorem~\ref{thm:regularity}.
It is holomorphic on a sufficiently small large-radius polydisc by
Theorem~3.4. By Proposition~2.3, the semisimple locus is exactly
$\{\mathcal D_0\ne0\}$. This is open and dense because $\mathcal D_0$
is a nonzero holomorphic function.
\end{proof}

\begin{corollary}\label{cor:semisimple-groups}
Let $\widetilde G$ be a connected complex semisimple Lie group and
$\widetilde P\subset\widetilde G$ a parabolic subgroup.
The big quantum cohomology of $\widetilde G/\widetilde P$ converges near the
large-radius limit and is generically semisimple, both formally and analytically.
\end{corollary}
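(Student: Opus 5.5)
The plan is to reduce to the simply connected simple case treated in Theorems~\ref{thm:convergence}, \ref{thm:regularity} and \ref{thm:analytic-semisimplicity}. This is done in two steps: first identify $\widetilde G/\widetilde P$ with a product of flag varieties $G_i/P_i$ of simple simply connected groups, and then show that convergence and generic semisimplicity are preserved under finite products.

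For the first step, we pass to the universal cover $\widehat G$ of $\widetilde G$. Since the center acts trivially on a flag variety (it lies in every parabolic), $\widetilde G/\widetilde P\cong\widehat G/\widehat P$, where $\widehat P$ is the preimage of $\widetilde P$. Moreover $\widehat G\cong G_1\times\cdots\times G_s$ with each $G_i$ simple and simply connected. Any parabolic subgroup of a product contains a Borel subgroup $B_1\times\cdots\times B_s$, and is therefore determined by a subset of the simple roots, which is a disjoint union of subsets for the factors. Hence $\widehat P=P_1\times\cdots\times P_s$, and so
\[
\widetilde G/\widetilde P\cong \prod_{i=1}^s G_i/P_i .
\]
Factors with $P_i=G_i$ are points and may be dropped, so each remaining factor is covered by Theorem~\ref{thm:main}.

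For the second step, we use the quantum K\"unneth formula, which holds in genus zero for products of convex varieties. It gives
\[
\QH^*(X_1\times X_2)\cong \QH^*(X_1)\,\widehat\otimes\,\QH^*(X_2)
\]
along the subspace of product coordinates $\tau=\tau^{(1)}\otimes1+1\otimes\tau^{(2)}$. The obstacle is that the big quantum product of the product involves all of $H^*(X_1)\otimes H^*(X_2)$, including mixed classes, so we do not simply restrict to product coordinates. Instead we rerun the two arguments directly for $X=\prod G_i/P_i$, avoiding a formal K\"unneth argument in the mixed directions.

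\emph{Convergence.} Take $T=\prod T_i$. The equivariant cohomology is still generated by the divisor $D=\sum D_i$ after localization: choose $\xi_0=(\xi_0^{(i)})$ generic so that the fixed-point values of $D$ are distinct. With $\lambda$ specialized at $\zeta\xi_0$, the determinant of $\mathcal V^T$ is a nonzero homogeneous polynomial of the same shape as before, so Proposition~\ref{prop:cyclicity} holds verbatim. Propositions~\ref{prop:uniform-reconstruction} and \ref{prop:eq-convergence} and the Cauchy argument of Theorem~\ref{thm:convergence} use only that $X$ is a Fano homogeneous $T$-variety, so they apply unchanged.

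\emph{Semisimplicity.} It suffices to exhibit one point of the convergence domain at which the fiber is semisimple; the trace discriminant $\mathcal D_0$ is then nonzero, and the density argument of Theorem~\ref{thm:analytic-semisimplicity} applies both formally and analytically. On the product-coordinate subspace, by K\"unneth the fiber is a tensor product of the fibers $(H^*(G_i/P_i),\star_{x^{(i)}})$. By Theorem~\ref{thm:analytic-semisimplicity} we can choose each $x^{(i)}$ in its dense semisimple locus. A tensor product of semisimple commutative $\C$-algebras, each isomorphic to some $\C^{N_i}$, is isomorphic to $\C^{\prod N_i}$, hence semisimple. Therefore $\mathcal D_0\neq0$ at this point. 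This also yields formal generic semisimplicity, because a nonzero convergent series is nonzero as a formal series. This gives the formal and analytic assertions of Corollary~\ref{cor:semisimple-groups}.

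The step I expect to be delicate is justifying quantum K\"unneth for the big theory restricted to product coordinates. For homogeneous (convex) spaces it follows from the product formula for virtual classes (Behrend), and in genus zero for convex targets it reduces to the unobstructed moduli spaces. We need it only at the level of the structure constants evaluated at product-coordinate points.
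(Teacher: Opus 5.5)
Your proposal is correct. The reduction to $\prod_i G_i/P_i$ and the semisimplicity step follow the paper's argument. The paper also restricts to parameters pulled back from the factors and uses the product formula to identify the fiber there with the tensor product of the factor fibers. It then chooses semisimple factor points near the large-radius limit by Theorem~\ref{thm:analytic-semisimplicity} and concludes, analytically and formally, from the nonvanishing of the trace discriminant.

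The genuine difference is in convergence. The paper does not reopen Section~3. It takes convergence of each simple factor from Theorem~\ref{thm:convergence} and combines the quantum product formula with Kaufmann's theorem \cite[Theorem~2.2.1]{Kauf} that convergence is inherited by products; that theorem is what controls the mixed directions $\sigma_v\otimes\sigma_w$. You instead observe that the equivariant reconstruction and the Cauchy argument never use simplicity of $G$, and you rerun them on the product.

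What each route buys:
\begin{itemize}
\item Your route is self-contained, avoids Kaufmann's theorem, and needs the K\"unneth formula only at product-coordinate points, for the semisimplicity step.
\item The paper's route is shorter and more general. Kaufmann's result applies to products of arbitrary varieties with convergent quantum cohomology, and it requires no re-inspection of the proof of Theorem~\ref{thm:convergence}.
\end{itemize}

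To make your version complete, you should make two points explicit.
\begin{itemize}
\item State the input that replaces \cite[Lemma~5.11]{BCMP}. Up to sign convention, the fixed-point value of $D$ at $(w_1P_1,\dots,w_sP_s)$ is $\sum_i(\rho_i-w_i\rho_i)$, where $\rho_i$ is the sum of the fundamental weights of $G_i$ indexed by $I^{P_i}$. The summands lie in the independent pieces $\operatorname{Lie}(T_i)^*$, and the stabilizer of $\rho_i$ in $\mathsf W_{G_i}$ is $\mathsf W_{P_i}$. Hence these values are pairwise distinct polynomials, so $D$ generates the localized equivariant cohomology, and $\det\mathcal V^T$ is a nonzero homogeneous polynomial as in Proposition~\ref{prop:cyclicity}.
\item Note that the semisimple factor points can be taken inside the product's convergence polydisc. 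Theorem~\ref{thm:analytic-semisimplicity} allows this, since it gives density of the semisimple locus on every sufficiently small polydisc.
\end{itemize}
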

\begin{proof}
After passing to the simply connected cover, $\widetilde G/\widetilde P$ is a
product of flag varieties for simple groups. The quantum product formula
\cite{Behr,KoMa} and Kaufmann's convergence theorem
\cite[Theorem~2.2.1]{Kauf} give convergence for the product.
On the locus of parameters pulled back from the factors, its quantum algebra
is the tensor product of the factor quantum algebras. Choose semisimple points
for the factors, arbitrarily near their large-radius limits, using
Theorem~\ref{thm:analytic-semisimplicity}. Their tensor product is semisimple,
so the trace discriminant of the product is not identically zero.
Its nonvanishing locus gives analytic generic semisimplicity, and its nonzero
Taylor series gives formal generic semisimplicity by Proposition~2.3.
\end{proof}

\begin{remark}\label{rmk:regular}
In the regularity conjecture  of Perrin and Smirnov \cite[Conjecture~1.7]{PeSm}, the big quantum cohomology was defined over $\overline{\mathbb{Q}(\!(\mathbf q)\!)}\dbb{\tau}$.
Regularity
of finite algebras over a formal power series ring is
preserved and reflected under completed extension of
characteristic-zero coefficient fields.
Hence, by Theorem~\ref{thm:regularity}, the big quantum cohomology of $G/P$ is also regular with respect to the coefficient convention in \cite{PeSm}.
 
  As explained in the proof of
Corollary~\ref{cor:semisimple-groups},
$\widetilde G/\widetilde P$ is a product of flag varieties
for simple groups.
By Theorem~\ref{thm:regularity}, the big quantum cohomology
of each factor is regular.
By the completed tensor-product description in
\cite[proof of Proposition~3.3.5]{Kauf},
the restriction to parameters pulled back from the factors
is regular.
Then the regularity of $QH^*(\tilde G/\tilde P)$, after extending coefficients to $\C(\mathbf q)\dbb{\tau}$, follows from its flatness over the power series ring in the remaining  parameters.
\end{remark}

\end{document}